\let\oldthebibliography=\thebibliography
\let\oldendthebibliography=\endthebibliography
\renewcommand{\cite}{\citep}
\let\thebibliography=\oldthebibliography
\let\endthebibliography=\oldendthebibliography
\newcommand{\SDPLRp}{\texttt{SDPLR+}\xspace}
\newcommand{\LRSDP}{\texttt{LRSDP}\xspace}
\newcommand{\kmeanspp}{\texttt{k-means++}\xspace}
\newcommand{\Qlqh}{$\text{Q}^{\text{lqh}}$\xspace}
\newcommand{\Qlse}{$\text{Q}^{\text{lse}}$\xspace}
\newcommand{\nnz}{\text{nnz}\xspace}
\newcommand{\SketchyCGAL}{\texttt{SketchyCGAL}\xspace}
\newcommand{\CSDP}{\texttt{CSDP}\xspace}
\newcommand{\gset}{\textsc{Gset}\xspace}
\newcommand{\snap}{\textsc{Snap}\xspace}
\newcommand{\dimacs}{\textsc{Dimacs}\xspace}
\newcommand{\dimacsten}{\textsc{Dimacs}$10$\xspace}
\renewcommand{\S}{\ensuremath{\mathbb{S}}}
\newcommand{\vol}{\mathrm{Vol}}
\newcommand{\cut}{\textbf{cut}}
\renewcommand{\diag}{\text{diag}}
\providecommand{\Diag}{\text{Diag}}
\algnewcommand{\IfThenElse}[3]{
  \State \algorithmicif\ #1\ \algorithmicthen\ #2 \State \algorithmicelse\ #3}
\algrenewcommand\algorithmicrequire{\textbf{Input:}}
\algrenewcommand\algorithmicensure{\textbf{Output:}}
\setlist[enumerate]{leftmargin=.5in}
\setlist[itemize]{leftmargin=.5in}
\crefname{hypothesis}{Hypothesis}{Hypotheses}
\crefname{fact}{Fact}{Facts}
\title{Suboptimality bounds for trace-bounded SDPs enable a faster and
  scalable low-rank SDP solver SDPLR+\thanks{Submitted to the editors
    DATE.  \funding{Gleich and Huang acknowledge partial support from
      awards NSF IIS-2007481, DOE DE-SC0023162, as well as the IARPA
      AGILE Program.}}}
\author{Yufan Huang\thanks{Purdue University, West Lafayette,
    IN (\email{2019hyf@gmail.com}).}  \and David
  F. Gleich\thanks{Purdue University, West Lafayette, IN
    (\email{dgleich@purdue.edu}).}  }
\begin{document}

\maketitle

\begin{abstract}
  Semidefinite programs (SDPs) and their solvers are powerful tools
  with many applications in machine learning and data
  science. Designing scalable SDP solvers is challenging because by
  standard the positive semidefinite decision variable is an
  $n \times n$ dense matrix, even though the input is often an
  $n \times n$ sparse matrix. However, the solution may not require a
  full-rank matrix, as shown by Barvinok and Pataki.  Two decades ago,
  Burer and Monteiro developed an SDP solver \texttt{SDPLR} that
  optimizes over a low-rank factorization instead of the full matrix.
  This greatly decreases the storage cost and works well for many
  problems.  The original solver \texttt{SDPLR} tracks only the primal
  infeasibility of the solution, preventing early termination at
  moderate accuracy. We use a suboptimality bound for trace-bounded
  SDP problems that enables us to track the progress better and
  perform early termination. We then develop \texttt{SDPLR+}, which
  starts the optimization with an extremely low-rank factorization and
  dynamically updates the rank based on the primal infeasibility and
  suboptimality. This further speeds up the computation and saves
  storage.  Numerical comparisons on Max Cut, Minimum Bisection, Cut
  Norm, and Lov\'{a}sz Theta problems with many recent
  memory-efficient scalable SDP solvers demonstrate the scalability of
  \texttt{SDPLR+} up to problems with million-by-million decision
  variables. It is often the fastest solver to a moderate accuracy of
  $10^{-2}$.  Further experiments on $\mu$-conductance, matrix
  completion, and $k$-means clustering show the potential of
  \texttt{SDPLR+} on a broader range of data science applications.
\end{abstract}

\begin{keywords}
  Semidefinite Programming, Adaptive Algorithms, Augmented Lagrangian Method,
  Maximum Cut, Minimum Bisection, Cut Norm, Lov\'{a}sz Theta
\end{keywords}

\begin{MSCcodes}
  90C06, 90C22
\end{MSCcodes}

\section{Introduction}
Semidefinite programs (SDPs) are an extremely capable class of convex
optimization problems. Conceptually, they generalize the class of
\emph{vector} decision variables used in linear programs to
\emph{matrix} decision variables. This creates optimization problems
where the solution is a symmetric, positive semidefinite matrix. The
class of semidefinite programs has many applications in machine
learning, including matrix completion~\cite{CRExact2012}, $k$-means
clustering~\cite{kulis-2007,Whang-2019-neo-k-means}, combining SAT
solvers and deep learning~\cite{wang19e}, along with many more
applications in control theory~\cite{BoyLinear1994}. In addition, SDPs
have offered a systematic way of designing approximation algorithms
for many NP-hard combinatorial problems, for instance, graph cut
problems~\cite{GWImproved1995,ARVExpander2009,HSGTheoretical2023},
graph clustering, and matrix cut-norm~\cite{ANApproximating2004}.

We consider SDPs of the following linear form
\begin{align}
  \label{eq:4} 
  \minimize_{\mX \in \S^n} \; \langle \mC, \mX \rangle \; \subjectto \; \cA(\mX) = \vb, \mX \succeq 0 \tag{SDP} 
\end{align}
where \(\S\) is the set of symmetric matrices with size
\(n \times n\), \(\mC \in \S^n\) is a cost matrix,
\(\langle \cdot, \cdot \rangle\) denotes the inner product of two matrices
\(\langle \mA, \mB \rangle = \smash{\sum_{ij} A_{ij} B_{ij}} \), and
\(\cA: \S^n \to \R^m\) is a linear operator encoding \(m\) linear
constraints with right-hand side vector \(\vb\). The constraint
notation \(\mathcal{A}(\mX)\) corresponds to the vector
\([ \langle \mA_1, \mX\rangle, \ldots , \langle\mA_m, \mX \rangle ]^{T}\). As a convex problem,
\cref{eq:4} can be solved to high accuracy via interior point methods
on small problem
instances~\cite{BorCSDP1999,StuUsing1999,TTTSDPT31999}. Interior point
methods, however, cannot solve large problem instances due to the
memory needed to store the decision variable with \(n^2\) entries and
the second-order information. This property has made scaling SDPs to
large problem instances, where $n$ is in the millions, challenging
and, consequently, has focused a long and continuing thread of
research on scaling SDPs.

Among scalable SDP solvers, possibly the most famous one is the Burer
and Monteiro solver
\texttt{SDPLR}~\cite{BMNonlinear2003,BMLocal2005,BCComputational2006}.
It was developed based on the observation that \cref{eq:4} admits an
optimal solution with rank \(O(\sqrt{m})\) based on a bound by
Barvinok and Pataki~\cite{BarProblems1995,PatRank1998}. Therefore,
they factorize the decision variable \(\mX\) into \(\mY\mY^T\) with
\(\mY\) having size \(n \times r\) where \(r = \Theta(\sqrt{m})\) and transform
\cref{eq:4} into the following \cref{eq:7}.
\begin{align}
  \label{eq:7}
  \minimize_{\mY \in \R^{n \times r}} \; \langle \mC, \mY\mY^T \rangle \; \subjectto \;  \cA(\mY\mY^T) = \vb \tag{BM-SDP} 
\end{align}
This transformation naturally eliminates the hard-to-optimize positive
semidefiniteness constraint and \texttt{SDPLR} tackles \cref{eq:7}
using the augmented Lagrangian (ALM) framework. The drawback of this
transformation is that it leads to a nonconvex problem and the
convergence to the global optimum is not guaranteed. Recent
research~\cite{BVBNonconvex2018,CifBurer2021} has shown that for a
generic problem instance, any local optimum of \cref{eq:7} is a
global optimum when \(r = \Omega(\sqrt{m})\), and non-generic exceptions
are possible~\cite{OSVBurerMonteiro2022}.

Also recently, another line of research studies memory-efficient SDP
solvers based on the Frank-Wolfe
method~\cite{JagRevisiting}. Regardless of how the Frank-Wolfe method
is integrated into the SDP solvers, it typically requires an
easy-to-characterize compact domain for solving the linear
optimization subproblems
efficiently~\cite{HazSparse2008,YTF+Scalable2021a,SNSMemoryEfficient2021a,SNSMemoryEfficient2021,PGSScalable2023,YFCConditionalGradientBased2019}. Often
this is a class of trace-bounded SDPs
\begin{align}
\label{eq:3}
  \minimize_{\mX \in \Delta_{\alpha}} \; \langle \mC, \mX \rangle \;  \subjectto \; \mathcal{A}(\mX) = \vb \tag{Trace-Bounded SDP} 
\end{align}
where
$\Delta_{\alpha} = \left\{ \mX \succeq 0: \text{Tr}(\mX) \leq \alpha \right\}$ is the convex
cone of positive semidefinite matrices with trace bounded by a
constant $\alpha \geq 0$. For this set, the conic linear optimization subproblem
involved in Frank-Wolfe turns into an eigenvalue problem. One can see
\cref{eq:3} has the same optimum with \cref{eq:4} if we pick
$\alpha \geq \Tr(\mX^*)$ where $\mX^{*}$ is an optimal solution. While
\cref{eq:3} is only a subclass of \cref{eq:4}, many SDPs used in
applications directly have a trace bound encoded by the constraints,
or have a trace bound that can be estimated. Examples include
$k$-means~\cite{kulis-2007,Whang-2019-neo-k-means}, graph
cuts~\cite{HSGTheoretical2023,FJImproved1997a}, phase
retrieval~\cite{balan2009painless}, correlation
clustering~\cite{bansal2004correlation}, and those we test in the
experiments.

One advantage of these Frank-Wolfe methods over \texttt{SDPLR} is that they
have an easy-to-compute surrogate duality bound for tracking the
suboptimality. This enables them to track the progress more precisely
and terminate the optimization efficiently.  Numerical experiments
demonstrate their scalability on large-scale problems, especially for
moderately accurate
solutions~\cite{PGSScalable2023,YTF+Scalable2021a}.  We note that
\texttt{SDPLR} does not track the suboptimality because the dual
problem of \cref{eq:4} (i.e.~\cref{eq:5}, which will be stated shortly) 
has a feasible region with a tricky constraint that requires 
a matrix to be positive semidefinite.

Our core contribution in this paper is that we design a faster and
more scalable SDP solver called \texttt{SDPLR+} based on combining
\texttt{SDPLR} with two techniques.
\begin{itemize}
\item We use the fact that the trace bound in \cref{eq:3} 
  results in an unconstrained dual problem where strong duality always
  holds (\cref{thm:strong-duality}).  This enables any solver
  with both primal and dual estimates to have a suboptimality bound,
  including \texttt{SDPLR} (\cref{sec:suboptimality}).
\item The suboptimality bound and primal infeasibility bound allow us
  to design a solver tolerant of smaller rank iterates.  Since we can
   track progress better, we start the
  optimization from an extremely small rank parameter $r$ and
  dynamically update the rank when no progress is made after a while.  
  This speeds up the computation and reduces the
  memory required.
\item The code is available at
  \url{https://github.com/luotuoqingshan/SDPLRPlus.jl}.
\end{itemize}
Although suboptimality and rank-adjustment 
have been individually considered in the literature in some
form~\cite{BMNonlinear2003,BVBNonconvex2018,DYC+OptimalStorage2020}, 
there is no discussion of how they can be combined into
a single SDP solver package, nor have the benefits of their combination been demonstrated.    We demonstrate the new
solver through comprehensive experiments using over 200 instances of
Maximum Cut, Minimum Bisection, Lov\'{a}sz Theta, and Cut Norm SDPs.
These experiments show that this simple combination is often faster
and more scalable than recently proposed Frank-Wolfe based methods for
computing moderately accurate solutions. We further demonstrate the applicability of \SDPLRp to data science problems,
including $\mu$-conductance, matrix completion, and $k$-means clustering.

In the remainder of the paper, we explain the details behind this approach. As noted above, many of these pieces have appeared individually in the literature before. However, they result in a \emph{simple} change to the SDPLR solver that has a remarkable impact on performance. 

\section{Preliminaries}

Matrices and vectors are written in bold. The norm \(\|\cdot\|\) refers to
the 2-norm of a vector and the spectral norm of a matrix, and
$\|\cdot\|_F$ denotes the Frobenius norm of a matrix. For a matrix
$\mX$, $\diag(\mX)$ denotes the vector consisting of its diagonal
entries.  For a vector $\vx$, $\Diag(\vx)$ denotes the diagonal matrix
whose diagonal entries are given by $\vx$.
    
Let \(G = (V, E)\) denote an undirected graph with vertex set \(V\)
and edge set \(E\).  We let \(uv\) denote the undirected edge between
vertex \(u\) and \(v\).  Each edge \(uv \in E\) is assigned a weight
\(w(uv)\). Non-edges have weight 0. The (weighted) degree of a vertex
$v$ is $\sum_{u: uv \in E}^{} w(uv)$, and let $\vd$ be the degree vector
where $\vd(u) = \deg(u)$. The degree matrix $\mD$ of a graph is simply
$\Diag(\vd)$. For any vertex set \(S\), let
\(\bar{S} = V \setminus S\) be its complement and the value of the cut induced
by \(S\) is
\(\textbf{cut}(S, \bar{S}) = \sum_{u \in S, v \in \bar{S}} w(uv)\).  The
volume of $S$, $\vol(S) \defeq \sum_{v \in S} \deg(v)$ is a size measure
for vertex sets, and $\vol(G) \defeq \vol(V) = 2 |E|$.
Let \(\mL\) be the Laplacian matrix of an undirected graph where
\(\mL_{i,j} = -w(ij)\) if \(i \neq j\) and
\(\mL_{i,i} = \smash{\sum_{j} w(ij)}\).

\section{Methodology and Algorithm}
In this section, we describe how we design \texttt{SDPLR+} to provide
stronger quality guarantees and make it more scalable than \texttt{SDPLR}
for \cref{eq:3}. We begin with a theoretical result about SDPs.

\subsection{Strong duality and a suboptimality bound}
\label{sec:suboptimality}
We design a suboptimality bound by using the fact that the trace bound
in \cref{eq:3} results in an unconstrained dual problem where strong
duality always holds.  In optimization, a suboptimality bound tells us
how far the current objective is away from the optimum, which gives a
precise measure of progress for the solver. In the context of
\cref{eq:4}, a suboptimality bound is any valid upper bound of
\(\langle \mC, \mX\rangle - \langle \mC, \mX^* \rangle\) where
\(\mX^*\) is the optimum of \cref{eq:4}.

A key tool for deriving a suboptimality bound is duality. Consider the
dual problem for \cref{eq:4}
\begin{align}
  \label{eq:5}
  \sup_{\vlambda} \; \vlambda^T \vb \;  \subjectto \; \mC - \cA^*(\vlambda) \succeq 0, \tag{SDD}
\end{align}
where \(\cA^*: \R^m \to \S^n\) is the adjoint of \(\cA\).  By
duality of convex programs, any feasible solution to \cref{eq:5} has
a dual value at most \(\langle \mC, \mX^*\rangle\).  Therefore the gap between
\(\langle \mC, \mX \rangle\) and the dual value serves as a suboptimality
bound. However, finding a non-trivial feasible solution to
\cref{eq:5} is not easy because of the positive semidefinite
constraint $\mC - \cA^*(\vlambda) \succeq 0$. Also, unless the SDP
satisfies strong duality, there may exist a gap between the primal and
dual solutions. 

The trace bound in \cref{eq:3}, however, changes the picture
entirely.  Consider the dual of \cref{eq:3}
\begin{align}
  \label{eq:12}
  \sup_{\vlambda} \; \vlambda^T \vb + \alpha \min\setof{\lambda_{\min} (\mC - \cA^*(\vlambda)), 0}, \tag{Trace-bounded SDD} 
\end{align}
we have the following result, which characterizes the relation between
\cref{eq:3} and \cref{eq:12}.
\begin{theorem}
 \label{thm:strong-duality} 
  When \cref{eq:3} is feasible, strong duality always holds between \cref{eq:3} and \cref{eq:12},
 and the optimum of \cref{eq:3} can be attained.
\end{theorem}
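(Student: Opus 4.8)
The plan is to establish three facts in turn: that \eqref{eq:12} is precisely the Lagrangian dual of \eqref{eq:3} (so that weak duality is automatic), that the primal optimum is attained, and that the primal and dual optimal values coincide. The engine behind all three is the compactness of the feasible set furnished by the trace bound, which is exactly the ingredient missing in the general pair \eqref{eq:4}--\eqref{eq:5}.

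First I would dualize only the affine constraint $\calA(\mX) = \vb$, keeping $\mX \in \Delta_\alpha$ as a hard constraint. Writing the Lagrangian $L(\mX, \vlambda) = \langle \mC, \mX\rangle + \vlambda^T(\vb - \calA(\mX)) = \vlambda^T\vb + \langle \mC - \calA^*(\vlambda), \mX\rangle$, the dual function is $g(\vlambda) = \vlambda^T\vb + \inf_{\mX \in \Delta_\alpha}\langle \mS, \mX\rangle$ with $\mS = \mC - \calA^*(\vlambda)$. The key computation is the inner infimum: using $\langle \mS, \mX\rangle \ge \lambda_{\min}(\mS)\,\Tr(\mX)$ for $\mX \succeq 0$ (which follows from $\langle \mS - \lambda_{\min}(\mS)\mI, \mX\rangle \ge 0$, the inner product of two positive semidefinite matrices), one checks that placing the entire trace budget on a bottom eigenvector when $\lambda_{\min}(\mS) < 0$, and taking $\mX = 0$ otherwise, yields $\inf_{\mX \in \Delta_\alpha}\langle \mS, \mX\rangle = \alpha\min\{\lambda_{\min}(\mS), 0\}$. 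This reproduces \eqref{eq:12} exactly and gives weak duality for free.

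For attainment, I would observe that $\Delta_\alpha$ is compact: it is closed, and $\mX \succeq 0$ together with $\Tr(\mX) \le \alpha$ forces $\|\mX\|_F \le \Tr(\mX) \le \alpha$, so $\Delta_\alpha$ is bounded. Under the feasibility hypothesis the set $\{\mX \in \Delta_\alpha : \calA(\mX) = \vb\}$ is a nonempty closed subset of $\Delta_\alpha$, hence compact, so the continuous linear objective $\langle \mC, \mX\rangle$ attains its minimum by Weierstrass's theorem.

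The remaining and genuinely delicate point is zero duality gap. The usual route, Slater's condition, is precisely what can fail here, since the constraints may force every feasible $\mX$ to the boundary of the positive semidefinite cone (this is the obstruction illustrated for \eqref{eq:5} in Appendix~\ref{sec:duality-gap}). Instead I would invoke Sion's minimax theorem. Since $L(\mX, \vlambda)$ is affine, hence convex and continuous, in $\mX$ and affine, hence concave and continuous, in $\vlambda$, while $\Delta_\alpha$ is compact and convex and $\R^m$ is convex, the hypotheses of Sion's theorem hold and $\min_{\mX \in \Delta_\alpha}\sup_{\vlambda} L = \sup_{\vlambda}\min_{\mX \in \Delta_\alpha} L$. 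The left-hand side equals the optimal value of \eqref{eq:3}, because the inner supremum over $\vlambda$ is $+\infty$ unless $\calA(\mX) = \vb$, and the right-hand side is the optimal value of \eqref{eq:12}; this is strong duality. Thus compactness, rather than an interior-point or Slater argument, is what carries the proof, and recognizing that the trace-induced compactness is the correct substitute for Slater is the main conceptual step.
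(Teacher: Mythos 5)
Your proof is correct and follows essentially the same route as the paper: compactness of $\Delta_\alpha$ via the trace bound, followed by Sion's minimax theorem applied to the bilinear Lagrangian, with attainment from compactness. The only difference is that you explicitly compute $\inf_{\mX \in \Delta_\alpha}\langle \mC - \calA^*(\vlambda), \mX\rangle = \alpha\min\{\lambda_{\min}(\mC - \calA^*(\vlambda)), 0\}$ to verify that the inner minimization produces \eqref{eq:12}, a step the paper leaves implicit.
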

\begin{proof}
  Note that the space of $n \times n$ real matrices can be viewed as
  $\R^{n^{2}}$. We first show that $\Delta_{\alpha}$ is a compact set lying in
  this space. To show the compactness of $\Delta_{\alpha}$, we use Heine–Borel
  theorem to turn it into proving $\Delta_{\alpha}$ is closed and bounded.  The
  set $\Delta_{\alpha}$ is closed as it is the intersection of two closed sets:
  (a) the semidefinite cone $\{\mX \in \S^n: \mX \succeq 0 \}$ and (b) the set
  of trace-bounded matrices. The set $\Delta_{\alpha}$ is bounded because
  \begin{align*}
    \|\mX\|_F = \sqrt{\Tr(\mX\mX^{T})} = \sqrt{\Tr(\mX^2)} \le \sqrt{\lambda_{\max}(\mX) \Tr(\mX)} \le \sqrt{\Tr(\mX)^2} \le \alpha 
  \end{align*}      
  where we used a known inequality
  $\Tr(\mA \mB) \le \lambda_{\max}(\mA) \Tr(\mB)$ for positive semidefinite
  matrices (instances of this date back to at least~\citet{1098852}).

  Now let
  $f(\mX, \vlambda) = \langle\mC, \mX\rangle - \vlambda^{T}(\mathcal{A}(\mX) - \vb)$.  As
  \cref{eq:3} is feasible,
  \begin{align*}
  \cref{eq:3} = \min_{\mX \in \Delta_{\alpha}} \sup_{\vlambda} f(\mX,
    \vlambda).
  \end{align*}
  Because $\Delta_{\alpha}$ is compact and convex, and both
  $f(\cdot, \vlambda)$ and $f(\mX, \cdot)$ are linear, Sion's minimax
  theorem~\cite{Sion1958} says
  \begin{align*}
  \cref{eq:3} & =  
                \min_{\mX \in \Delta_{\alpha}} \sup_{\vlambda} f(\mX, \vlambda) \\
              & = 
  \sup_{\vlambda} \min_{\mX \in \Delta_{\alpha}} f(\mX, \vlambda) =
  \cref{eq:12}.  
  \end{align*}
  The optimum of \cref{eq:3} can be attained because
  $\Delta_{\alpha}$ is compact.
\end{proof}

Note that \cref{thm:strong-duality} can also be derived via extending the
original SDP into an equivalent new SDP with primal variable being an
$(n+1)\times(n+1)$ positive semidefinite matrix and encoding the trace
bound as a constraint in the new SDP. Then Theorem 2.2.5 of
\cite{helmberg2000semidefinite} implies strong duality because
the dual problem of the new SDP is strictly feasible. To help
understand why this result changes the picture of duality -- and closes
the duality gap for SDPs -- we present an illustrative example. 

\paragraph{Duality Gap Example} A classical example showing that a duality gap may
exist between \cref{eq:4} and \cref{eq:5} is
\begin{align*}
  \minimize &\quad  y_1 \\
  \subjectto &\quad
        \begin{bmatrix}
          0 & y_1 & 0 \\
          y_1 & y_2 & 0 \\
          0 & 0 & y_1 + 1 
        \end{bmatrix}
        \succeq 0.
\end{align*}
Because any principal submatrix of a symmetric positive semidefinite
matrix has to be positive semidefinite, we get $y_1^2 \leq 0$ and the
optimum of it is 0.  Writing it in the standard \cref{eq:4} form
gives us
\begin{align}
  \label{eq:16}
  \minimize_{\mX \succeq 0} & \quad \langle
         \Bigl[ \begin{smallmatrix}
           0 & \frac{1}{2} & 0 \\
           \frac{1}{2} & 0 & 0 \\
           0 & 0 & 0 
         \end{smallmatrix} \Bigr],
         \mX \rangle \\ 
  \subjectto & \quad \langle
        \Bigl[ \begin{smallmatrix}
           0 & 0 & 1 \\
           0 & 0 & 0 \\
           1 & 0 & 0
        \end{smallmatrix} \Bigr],
        \mX \rangle = 0,
  \langle
        \Bigl[ \begin{smallmatrix}
           0 & 0 & 0 \\
           0 & 0 & 1\\
           0 & 1 & 0
        \end{smallmatrix} \Bigr],
        \mX \rangle = 0 ,
 \nonumber  \\
  & \quad \langle
        \Bigl[ \begin{smallmatrix}
           1 & 0 & 0 \\
           0 & 0 & 0 \\
           0 & 0 & 0
        \end{smallmatrix} \Bigr],
        \mX \rangle = 0 ,
  \langle 
         \Bigl[ \begin{smallmatrix}
           0 & 1 & 0 \\
           1 & 0 & 0 \\
           0 & 0 & -2 
         \end{smallmatrix} \Bigr],
         \mX \rangle = -2, \nonumber 
\end{align}
whose dual is 
\begin{align}
 \label{eq:15} 
\maximize_{\vlambda \in \R^4}&\quad -2 \lambda_4 \\ 
\subjectto&\quad
\begin{bmatrix}
-\lambda_3 & \frac{1}{2} - \lambda_4 & -\lambda_1 \\
\frac{1}{2} - \lambda_4 & 0 & -\lambda_2 \\
-\lambda_1 & -\lambda_2 & 2\lambda_4
\end{bmatrix} \succeq 0 \nonumber.
\end{align}
Similarly, we have \((\frac{1}{2} - \lambda_{4})^2 \le 0\), which leads to
$\lambda_4 = \frac{1}{2}$ and \cref{eq:15} having optimum \(-1\). A duality
gap does exist.

However, if we add one trace bound \(\Tr(\mX) \le 1\) to \cref{eq:16},
its optimum stays unchanged and its dual becomes
\begin{align*}
\maximize_{\lambda \in \R^4} - 2 \lambda_4 + \min \left\{ 0, \lambda_{\min}
\left(\begin{bmatrix}
-\lambda_3 & \frac{1}{2} - \lambda_4 & -\lambda_1 \\
\frac{1}{2} - \lambda_4 & 0 & -\lambda_2 \\
-\lambda_1 & -\lambda_2 & 2\lambda_4
\end{bmatrix}\right)
\right\},
\end{align*}
whose optimum is also 0 (plug in
$\lambda_1 = \lambda_2 = \lambda_{3} = 0, \lambda_4 = - \frac{1}{2}$).

\begin{corollary}
Because \cref{eq:12} is unconstrained, any \(\vlambda\) is feasible
for this dual problem. As a result any dual solution lower bounds
\(\langle \mC, \mX^*\rangle\) and gives the following suboptimality bound
\begin{align}
\label{eq:14}
\langle \mC, \mX\rangle - \langle \mC, \mX^{*} \rangle \leq \langle\mC, \mX\rangle - \vlambda^T \vb - \alpha \min\{\lambda_{\min} (\mC - \cA^*(\vlambda)), 0\}. 
\end{align} 
\end{corollary}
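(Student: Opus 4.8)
The plan is to read the bound off directly from Theorem~\ref{thm:strong-duality}. First I would note that \eqref{eq:12} carries no constraint on $\vlambda$, so every $\vlambda \in \R^m$ is feasible and each one supplies a lower bound on the optimal dual value: writing $g(\vlambda) = \vlambda^T\vb + \alpha\min\{\lambda_{\min}(\mC - \calA^*(\vlambda)), 0\}$ for the dual objective, we trivially have $g(\vlambda) \le \sup_{\vlambda'} g(\vlambda')$.

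Next I would invoke Theorem~\ref{thm:strong-duality}, which equates the optimal value of \eqref{eq:12} with that of \eqref{eq:3}, i.e.\ with $\langle \mC, \mX^*\rangle$. Chaining the two facts gives $g(\vlambda) \le \langle \mC, \mX^*\rangle$ for every $\vlambda$. The bound \eqref{eq:14} then follows by pure algebra: from $\langle\mC,\mX\rangle - \langle\mC,\mX^*\rangle \le \langle\mC,\mX\rangle - g(\vlambda)$ one obtains exactly the right-hand side of \eqref{eq:14} once $g(\vlambda)$ is expanded.

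Since the result is an immediate corollary, there is no genuine obstacle; the only point worth spelling out is that validity of the bound needs only weak duality, which one can also verify by hand. For any primal-feasible $\mX \in \Delta_{\alpha}$ with $\calA(\mX) = \vb$, adding the vanishing term $-\vlambda^T(\calA(\mX)-\vb)$ gives $\langle\mC,\mX\rangle = \langle \mC - \calA^*(\vlambda), \mX\rangle + \vlambda^T\vb$, and the positive semidefinite inequality $\langle \mM, \mX\rangle \ge \lambda_{\min}(\mM)\Tr(\mX)$ together with $0 \le \Tr(\mX) \le \alpha$ yields $\langle \mC - \calA^*(\vlambda), \mX\rangle \ge \alpha\min\{\lambda_{\min}(\mC - \calA^*(\vlambda)), 0\}$; taking $\mX = \mX^*$ recovers $g(\vlambda) \le \langle\mC,\mX^*\rangle$. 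Strong duality from Theorem~\ref{thm:strong-duality} is what additionally guarantees this family of bounds is tight, so that an optimal $\vlambda$ drives the gap to zero.

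In the solver this means any dual estimate -- for example the Lagrange multipliers maintained by the augmented Lagrangian method in \texttt{SDPLR} -- can be substituted for $\vlambda$ to certify suboptimality, in sharp contrast to \eqref{eq:5}, where the semidefiniteness constraint makes producing a usable $\vlambda$ hard.
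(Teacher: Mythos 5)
Your proposal is correct and follows essentially the same route as the paper: the corollary is justified exactly by observing that \eqref{eq:12} is unconstrained, so any $\vlambda$ yields a valid dual value that lower bounds $\langle \mC, \mX^{*}\rangle$, and \eqref{eq:14} is then immediate algebra. Your added remark that the bound itself needs only weak duality (verified directly via $\langle \mM, \mX\rangle \ge \lambda_{\min}(\mM)\Tr(\mX)$) while Theorem~\ref{thm:strong-duality} supplies tightness is consistent with the paper's own discussion following the corollary.
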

The bound in this corollary is also present
in~\citet{DYC+OptimalStorage2020,angell2024fast}, although that bound
is presented under the assumption that strong duality holds. Our
results show this is not necessary.

Although this may look simple, it proves to be simpler and cleaner than
existing suboptimality bounds.  \Citet{BVBNonconvex2018}
give a suboptimality bound from the Riemannian optimization
perspective, which, however, requires a strong assumption that the
gradient of all constraints \(\cA(\mY \mY^T)\) are independent for
all \(\mY\) on the manifold
\(\cM = \{\mY \in \R^{n \times r}: \cA(\mY \mY^T) = \vb\}\).  \Citet{YTF+Scalable2021a} also addresses \cref{eq:3} but gives
a suboptimality bound by considering the Frank-Wolfe surrogate
duality gap for the augmented Lagrangian. Their suboptimality bound
has a more complicated form and by simple algebraic manipulation, one
can show that it shares a connection with \cref{eq:14} but is worse, which we illustrate in \cref{sec:suboptimality-other}.

\subsection{Implicit links to the trace-bounded dual in \texttt{SDPLR}}
The goal of this section is to explain that the augmented Lagrangian
strategy in \texttt{SDPLR} should do a reasonable job of producing
primal and dual iterates that are tight in terms of the suboptimality
bound~\cref{eq:14}. To do so, we need a brief recap of the augmented
Lagrangian method, before describing how the subproblems map into our
setting.

The augmented Lagrangian method is a popular framework for constrained
optimization problems (see~\citet{NWNumerical2006} for
example). Consider constrained problems of the following form
\begin{align}
\label{eq:9}
\minimize_{\vx \in \cD} \; f(\vx) \; \subjectto \; g(\vx) = \zeros \tag{Constrained Optimization}
\end{align}
where \(f: \cD \to \R\) is a real-valued objective function and
\(g: \cD \to \R^m\) encodes \(m\) constraints. From a minimax
perspective, \cref{eq:9} can be equivalently formulated as
\[ \min_{\vx \in \cD} \max_{\vlambda}f(\vx) - \vlambda^T g(\vx) \]
where
\(\vlambda\) is the Lagrangian multipliers. However, the inner
\(\max_{\vlambda}\) can be nonsmooth. The ALM framework tackles this
by introducing a quadratic proximal term penalizing \(\vlambda\)
deviating from the prior estimate \(\bar{\vlambda}\), which in turn
gives the following minimax problem
\begin{align}
\label{eq:10}
\min_{\vx \in \cD} \max_{\vlambda} f(\vx) - \vlambda^T g(\vx) -
\frac{1}{2\sigma}\|\vlambda - \bar{\vlambda}\|^2. 
\end{align}
We can see that the inner maximum of \cref{eq:10} is attained at
\begin{align}
\label{eq:8}
\vlambda = \bar{\vlambda} - \sigma g(\vx) 
\end{align}
and plugging it into \cref{eq:10} simplifies the outer minimization
problem to
\begin{align}
 \label{eq:11} 
\minimize_{\vx \in \cD} f(\vx) - \bar{\vlambda}^T g(\vx) + \frac{\sigma}{2}\|g(\vx)\|^2. 
\end{align}
A standard ALM implementation involves solving the inner and outer
optimization problems of \cref{eq:10} alternately and \cref{eq:8},
\cref{eq:11} are usually referred to as dual and primal updates.
Usually $\sigma$ will increase across iterations to sharpen the proximal
term, which is quite intuitive as \cref{eq:10} is equivalent to
\cref{eq:9} when $\sigma\rightarrow\infty$.

We use the ALM method to provide good primal and dual estimates for
\cref{eq:3} and \cref{eq:12} in \texttt{SDPLR+}, just as it was used in
\texttt{SDPLR}. In particular, the primal update is
\begin{align*}
\minimize_{\mX \in \Delta_{\alpha}} \; \langle \mC, \mX\rangle - \bar{\vlambda}^T \left( \mathcal{A}(\mX) - \vb \right)
+ \frac{\sigma}{2} \|\mathcal{A}(\mX) - \vb\|^2,
\end{align*}
and the dual update is
$\vlambda = \bar{\vlambda} - \sigma(\mathcal{A}(\mX) - \vb)$. Using the
Burer-Monteiro factorization over the factors $\mY\mY^T$ of $\mX$
turns the primal and dual updates into
\begin{align*}
\minimize_{\mY \in \R^{n \times r} : \Tr(\mY\mY^T) \leq \alpha} \; \langle \mC, \mY \mY^T\rangle - \bar{\vlambda}^T \left( \mathcal{A}(\mY \mY^T) - \vb \right)
+ \frac{\sigma}{2} \|\mathcal{A}(\mY \mY^T) - \vb\|^2
\end{align*}
and
$\vlambda = \bar{\vlambda} - \sigma(\mathcal{A}(\mY \mY^T) - \vb)$. There is a strong
parallelism between these updates and the primal, dual updates of
\texttt{SDPLR}.  The only difference is the constraint
$\Tr(\mY\mY^T) \leq \alpha$, which is usually not violated badly because the
trace bound encoded in the constraints implicitly prevents it from
deviating too much.  Intuitively, this shows that although originally
designed to provide estimates for \cref{eq:4} and \cref{eq:5},
\texttt{SDPLR} is implicitly optimizing the primal and dual of the
trace-bounded version. This shows why this particular bound is likely
to be successful.

\subsection{The overall \texttt{SDPLR+} algorithm and dynamic rank updates}
\label{sec:sdplr-overall}
We summarize key points of the overall approach and then give the core
pseudocode. Like \texttt{SDPLR}, we use the augmented Lagrangian
approach. The minimization problem in the primal update is
approximately solved using L-BFGS~\cite{liu1989limited}.  The line
search in L-BFGS is performed by solving a cubic
equation~\cite{BCComputational2006}.


\paragraph{Eigenvalue Computation}
One critical computation involved in evaluating the suboptimality
bound is evaluating the minimum eigenvalue of
\(\mC - \cA^*(\vlambda)\). We use the Lanczos method with random
start \cite{KWEstimating1992} and adopt the memory-efficient
implementation from \citet{YTF+Scalable2021a}. In our case, we 
compute the minimum eigenvalue when evaluating the suboptimality to
evaluate termination criteria. Thus, we run the method longer
as the outer ALM iterations progress to achieve a higher accuracy.
In particular, similar to \citet{YTF+Scalable2021a}, we take
$2 \sqrt{t} \log n$ steps, where $t$ is the number of ALM
iterations as in \cref{alg:sdplr+}.

\paragraph{Dynamic Rank Update}
Although Burer-Monteiro based methods have good theoretical guarantees
when \(r = \Omega(\sqrt{m})\), a decision variable \(\mY\) with
\(\Omega(n \sqrt{m})\) non-zero entries is still far from scalable. Hence
in practice, a much smaller rank is usually picked to speed up the
computation and save memory.  The downside is that
Burer-Monteiro based methods may fail to converge to the optimum.  A
common strategy is to increase the rank dynamically. \texttt{SDPLR}
and \texttt{Manopt+}~\cite{BMNonlinear2003,BVBNonconvex2018} both
dynamically increase the rank from a small number to
\(\Omega(\sqrt{m})\), but they either do not consider early termination or
implement early termination based on the heuristic that the answer is
close to the optimum if the change of objective value is small across
two consecutive iterations.  Now because we have an easy-to-compute
suboptimality bound, we can terminate the dynamic rank update once the
desired suboptimality precision is achieved.

\paragraph{The \texttt{SDPLR+} algorithm}
We state the core algorithm of \texttt{SDPLR+}.  We
use $\eta_t, \omega_t, \xi_t$ to denote the stationarity tolerance, primal
infeasibility tolerance, and suboptimality tolerance in the $t$-th
iteration, respectively. We let $\omega^{*}, \xi^{*}$ denote the desired
primal infeasibility and suboptimality tolerances input by the user.
We use $\vlambda$ to denote the Lagrangian multipliers estimated and
$\sigma$ to denote the smoothing parameter in the augmented Lagrangian method.
The augmented Lagrangian function is
\begin{align}
\label{eq:13}
f(\mY) = \langle\mC, \mY\mY^T \rangle - \vlambda^{T}(\mathcal{A}(\mX) - \vb) + \frac{\sigma}{2} \norm{\mathcal{A}(\mX) - \vb}^{2}.\nonumber
\end{align}
For ease of notations, we overload the notations and let
\begin{align*}
&\eta(\mY, \vlambda) = \frac{\norm[F]{\nabla f(\mY)}}{1 + \norm[F]{\mC}}, \;
\omega(\mY)
= \frac{\norm{\cA(\mY \mY^T) - \vb}}{1 + \norm{\vb}}, \; \\
& \xi(\mY, \vlambda) 
= \frac{\langle\mC, \mY\mY^T \rangle - \vlambda^T \vb - \alpha \{\lambda_{\min} (\mC - \cA^*(\vlambda)), 0\}}{1 +| \langle\mC, \mY\mY^{T} \rangle | }
\end{align*}
denote the relative stationarity, primal infeasibility and suboptimality
evaluated at the current iterates $\mY, \vlambda$ respectively.
\begin{algorithm}[t]
\begin{algorithmic}[1]
\footnotesize 
\Require problem data \(\mC, \mathcal{A}, \vb\), trace bound $\alpha$, output precisions $\omega^{*}, \xi^{*}$ and initial rank $r$.      
  \Ensure $\mY$ satisfying desired primal infeasibility and suboptimality precision
\State $\sigma_0 = 2, \eta_0 = 1/\sigma_0, \omega_0 = 1/\sigma_0^{0.1}, \gamma = 4$
\State Randomly initialize $\mY$ using initial rank $r$ and $\vlambda$.
  \For {$t = 0, 1, \ldots$}
\State Use L-BFGS to find a new $\mY$ such that $\eta(\mY, \vlambda) \leq \eta_t$ \Comment{Find a stationary point}
\If{$\omega(\mY) \leq \omega_t$} \Comment{If primal infeasibility is small}
\If{$\omega(\mY) \leq \omega^{*}$} 
\IfThenElse{$\xi(\mY, \vlambda) \leq \xi^{*}$}{Break \Comment{Find a solution}}{$\gamma = \gamma - 1$ \Comment{Decrease dynamic rank update counter}}
\EndIf
\State $\vlambda = \vlambda - \sigma(\mathcal{A}(\mY\mY^T) - \vb)$ \Comment{Update the dual estimate}
\State $\eta_{t+1} = \eta_t/\sigma_t, \omega_{t+1} = \omega_t/\sigma_t^{0.9}$
\Else \State $\sigma_{t+1} = 2\sigma_{t}, \eta_{t+1} = 1/\sigma_{t+1}, \omega_{t+1} = 1/\sigma_{t+1}^{0.1}$ \Comment{Enlarge primal infeasibility penalty}
\EndIf
\If{$\gamma = 0$} \Comment{Double the rank}
\State $r = \min\{2r, \lfloor \sqrt{2m} + 1\rfloor\}$, regenerate $\mY$ and $\vlambda$ accordingly,
set $\gamma = 4$.
  \EndIf
\EndFor
\end{algorithmic}
\caption{Pseudocode of the core algorithm of \texttt{SDPLR+}}
\label{alg:sdplr+}
\end{algorithm}
We generate the initial $\mY$ and $\vlambda$ randomly.
The pseudocode
is summarized in \cref{alg:sdplr+}. The main difference with
\texttt{SDPLR} is that we apply the two techniques we introduced.
Also, we customized the augmented Lagrangian update rules, adopting
one from \citet{NWNumerical2006}.



\section{Related Work}
For detailed discussions on solving semidefinite programs, see the
surveys \citet{MHASurvey2019,YYSURVEY2015}, and for discussion of
low-rank solutions, see~\citet{LSYLowRank2016}. This section discusses
the most relevant related work.

\subsection{Scalable SDP solvers}
The two most related classes of scalable SDP solvers are
Burer-Monteiro based methods and Frank-Wolfe based methods. As
mentioned before, the core idea of Burer-Monteiro based methods is to
optimize over the low-rank factors of the decision matrix.  Besides
\texttt{SDPLR}, another line of Burer-Monteiro methods is Riemannian
optimization based, which optimizes the low-rank factors on the
Riemannian manifolds with or without extra constraints, instead of in
the Euclidean space~\cite{BVBNonconvex2018,LBSimple2019}.
Concurrently to our research, \citet{monteiro2024low}
proposed an efficient augmented Lagrangian method for \cref{eq:3}
based on combining an adaptive inexact proximal point method with
inner acceleration and Frank-Wolfe steps.

Roughly, the Frank-Wolfe-based methods can be categorized into two
classes: (1) \citet{YFCConditionalGradientBased2019} propose a solver
\texttt{CGAL} that tackles the constrained problem \cref{eq:4} via the
augmented Lagrangian method where the primal update is performed by
one Frank-Wolfe update instead of the conventional minimization.
Furthermore, they propose a more scalable solver which uses efficient
sketching techniques for symmetric positive semidefinite matrices to
cut the memory usage~\cite{TYUCFixedrank2017,YTF+Scalable2021a}.  (2)
Another line is similar to the penalty method that integrates the
constraints into the objective with properly chosen penalty function
and coefficient, and optimizes the objective efficiently using
Frank-Wolfe method~\cite{HazSparse2008}. In this vein,
\citet{SNSMemoryEfficient2021a,SNSMemoryEfficient2021} recently
observed that for SDPs that are designed for combinatorial problems
and have a hyperplane rounding algorithm, one can cut the memory cost
by only tracking one random vector with a distribution
$\mathcal{N}(\zeros, \mX)$. In this way, $\mX$ is implicitly encoded and is not
required to be stored.  Recently, \citet{PGSScalable2023} combined
this extreme point sampling idea with a reformulation from
\citet{NesBarrier2011a} to design a scalable SDP solver \texttt{SCAMS}
for Max Cut.

Concurrent to our work, \citet{angell2024fast} designed \texttt{USBS}
which adapts spectral bundle and combines it with
sketching. This demonstrates a scalable solver
in tests with $n$ in the millions.

\subsection{Theoretical Guarantees of Burer-Monteiro Methods}

Due to the great empirical success of the Burer-Monteiro method, there has
been an extensive effort for developing theory for it.  Recently
\citet{BVBNonconvex2018,BVBDeterministic2020} showed that if
the feasible set is a smooth manifold, the cost matrix $\mC$ is generic,
and $r$ satisfies $r(r+1)/2 > m$, then equality-constrained SDPs have
no spurious 2nd-order critical points. Put another way, all 2nd-order
critical points are global optima under these
conditions.  \Citet{CifBurer2021} generalized their result to broader
classes of SDPs.  On the other hand, \citet{WWRank2020} show that the
Barvinok-Pataki bound is essential as the 2nd-order critical points of
Burer-Monteiro are not generically optimal even if the SDP admits a
global optimal solution with rank lower than the Barvinok-Pataki
bound. \Citet{OSVBurerMonteiro2022} show that the assumption that the
cost matrix is generic is necessary and construct examples where
Burer-Monteiro has spurious 2nd-order critical points.  Finally, in
the setting of smooth analysis, \citet{CMPolynomial2022} show that for
$r$ larger than the Barvinok-Pataki bound, the Burer-Monteiro method
can provide an SDP solution to any accuracy in polynomial time.

\section{Experiments}

\label{sec:experiments} 
We evaluate our algorithm and a variety of other solvers on over 200
problems in a test set described in the following subsection. We test a
slightly different set of solvers for each problem because some solvers
target more specific subclasses of \cref{eq:3}.
While we
would have liked to test all of the recently proposed solvers, we were
unable to find code available for~\citet{monteiro2024low}. We do more
limited comparisons against the \texttt{USBS} method 
from~\citet{angell2024fast} due to issues encountered when running
their code. The results in \cref{sec:extra-solvers} 
show that we are likely faster in many
scenarios.  

\subsection{Experimental setup}

Since the remaining solvers are written in many different
programming languages, we evaluate all solvers in a single threaded
scenario with many single threaded solvers running concurrently. This
should be the most energy efficient and representative scenario given
the dynamic frequency scaling of modern CPUs. We ran our evaluation on
different servers with slightly different processors. In order to
compare across problems, we guarantee that all solvers on a given
problem instance were run on the same server. We also limit total RAM
access for each solver to 16GB and limit each solver to a runtime of 8
hours.  On all problem instances, we start the dynamic rank update of
\texttt{SDPLR+} from 10, and set the L-BFGS history size to 4.

We ran our experiments on three servers, two of which had the same
configuration.
\begin{itemize}
\item One server with 64 cores and each core has two threads. The CPU
model is Intel(R) Xeon(R) CPU E7-8867 v3 @ 2.50GHz.
\item Two servers with 28 cores each and each core has two
threads. The CPU model is Intel(R) Xeon(R) CPU E5-2690 v4 @ 2.60GHz.
\end{itemize}
In our experiments, we let each solver solve each problem instance
using only one thread.  We use \texttt{GNU
Parallel}~\cite{gnu-parallel} to parallelize the benchmarking and
timeout each solver after running for 8 hours by setting
\texttt{-{}-timeout 28800}.  We limit the total RAM access to 16GB via
\texttt{ulimit -d 16777216}.

Throughout all experiments, we stop
all the optimizers according to the following quality metrics: \\
\noindent\begin{minipage}[b]{.5\linewidth}
\begin{align}
\label{eq:18}
\frac{\| \mathcal{A}(\mX) - \vb\|}{1 + \|\vb\|} \tag{Primal Infeasibility}
\end{align}
\end{minipage}%
\begin{minipage}[b]{.5\linewidth}
\begin{align}
\label{eq:19}
\frac{\langle\mC, \mX\rangle - \langle\mC, \mX^{*}\rangle}{1 + | \langle \mC, \mX \rangle |} \tag{Suboptimality}
\end{align}
\end{minipage}

We study the solvers on four distinct problems: Max Cut, Minimum
Bisection, Lov\'{a}sz Theta, and Cut Norm. We only evaluated a solver
if the authors provided an implementation for solving SDP or if it
could be implemented in a way where we were confident would accurately
represent performance.

\subsection{Datasets}
The SDPs we use correspond to graph problems. As such, there is a
wealth of possible data for benchmarking the solvers. We chose these
graphs based on other comparisons of SDP solvers.
\begin{itemize}
\item \gset~\cite{Helmberg2000}, accessed via~\cite{YYY-Gset}: This is a set of 71
randomly generated square matrices from the \texttt{rudy} graph generator~\cite{Rinaldi-Rudy} with either binary or
\(-1, 0, +1\) values.  Sizes range from 800 to 20000 rows and up to
80000 non-zero entries. 
\item \snap~Datasets~\cite{snapnets}: We pick a list of 4 web
graphs and 11 community graphs from \snap 
\begin{itemize} 
\item web-BerkStan, web-Google, web-Stanford~\cite{LLDMCommunity2008}
\item web-NotreDame~\cite{AJBDiameter1999}
\item ca-AstroPh, ca-CondMat, ca-GrQc, ca-HepPh, ca-HepTh \\ \cite{LKFGraph2007}, 
\item com-youtube, com-dblp, com-amazon~\cite{YLDefining2015},
\item email-Enron~\cite{KYIntroducing2004},
\item musae-facebook~\cite{RASMultiScale2021},
\item feather-deezer-social~\cite{RSCharacteristic2020}.
  \end{itemize} 
\item \dimacsten ~\cite{DBLP:conf/dimacs/2012}: This is a set of 151
different kinds of graphs from the 10th \dimacs Implementation
Challenge for graph partitioning and graph clustering subtasks.  For
\dimacsten, for the interest of benchmarking time, we sort graphs by
the number of vertices they contain and keep the first 116
graphs. The largest one, \texttt{rgg\_n\_2\_20\_s0}, has
$n = \text{1,048,576}$ and $\text{89,239,674}$ nonzeros. Data from
the following papers is included: \citet{Baird_1989,DBLP:conf/ipps/HoltgreweSS10,DBLP:conf/wea/SafroSS12,DBLP:journals/cluster/ChanLA12,DBLP:journals/jgo/SoperWC04,Murphy-2010-graph500,Watts_1998,geisberger2008better,skitter_router_adjacencies,kluge2011efficient}.
\end{itemize}

\begin{figure}[t]
\centering
\hfill
\subfloat[Stats of $n$]{\includegraphics[width=0.32\linewidth]{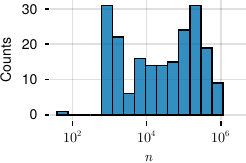}}%
\hfill
\subfloat[Stats of Non-zeros]{\includegraphics[width=0.32\linewidth]{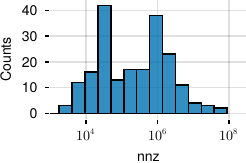}}%
\hfill
\label{fig:dataset-details}
\caption{Statistics of problem dimension, $n$, and the number of non-zeros in the 202 matrices in our test set. }  
\end{figure}

In total, we build a collection of 202 graphs; statistics are summarized in
\cref{fig:dataset-details}.  For all the graphs, we drop directions
and symmetrize the adjacency matrix if they are directed.
For \snap datasets, we take the
largest components of the graphs and remove self-loops.

\subsection{Performance Profile Plots} 
We use performance profile
plots~\cite{dolan2002benchmarking,BenchmarkProfiles} to compare the
solvers across this range of problems. Since each solver \emph{may not
be the fastest} or \emph{best} on every problem instance, this shows
a profile of how often the solvers are within a $\tau$-\emph{bound} of
the best. It's reminiscent of a receiver-operator curve or
precision-recall curve. The goal is to be ``up and to the left'' in
this case. This means that a given solver was the best or within a
small factor of being the best on the largest number of problems. For
all performance profile plots throughout this paper, the x-axis $\tau$
represents the ratio between the current solver and the best one with
regard to a specific metric (time or discrete objective value).  The
point on the right of a solver curve corresponds to what fraction of
the total set of problems it solved within the time bound.

\subsection{Max Cut}
\label{sec:exp-max-cut}

%

\begin{figure}[t]
\captionsetup[subfigure]{justification=centering}
\centering
\subfloat[Runtime by Problem Size][Runtime by \\Problem Size]{\label{fig:max-cut-scalability}\includegraphics[width=0.28\linewidth]{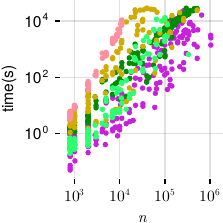}}%
\hfill
\subfloat[Performance Profile of Runtime][Performance Profile \\ of Runtime]{\label{fig:max-cut-running-time}\includegraphics[width=0.28\linewidth]{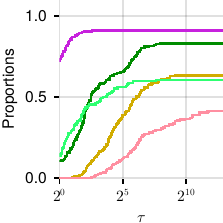}}%
\hfill
\subfloat[Performance Profile of Rounded Cuts][Performance Profile \\ of Rounded Cuts]{\label{fig:max-cut-cuts}\includegraphics[width=0.28\linewidth]{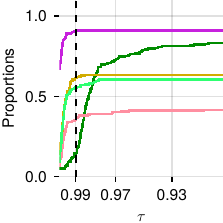}}%
\hfill
\subfloat{\includegraphics[width=0.15\linewidth]{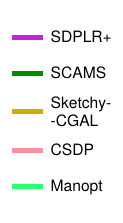}}%
\caption{Results on Max Cut. We set tolerance
$\varepsilon = 10^{-2}$, and test each solver on all 202 graphs.  We give each solver 8 hours to run before ending the experiment with a failure case.  \cref{fig:max-cut-scalability}
and \cref{fig:max-cut-running-time} display how the running time
of each solver scales with the problem size $n$ and the running
time performance profile plot of all the solvers respectively.
  Note that \texttt{SDPLR+} is the fastest solver for most of the
problem instances and is within a small factor of the optimal
solver for the other instances. While \texttt{SCAMS} also
exhibits good scalability, from \cref{fig:max-cut-cuts}, we see
that it usually produces smaller cuts.  }
\label{fig:max-cut}
\end{figure}

Max Cut is a combinatorial problem that aims to find the maximum
cut induced by any vertex set \(S\), i.e., solving
\(\max_{S \subseteq V} \textbf{cut}(S, \bar{S})\).  Because Max Cut is NP-hard
to solve, a common strategy is relaxing it to the following
semidefinite program and then rounding the real solution to a discrete
partition \((S, \bar{S})\)~\cite{GWImproved1995},
\begin{align}
\label{eq:1}
\tag{Max Cut}
\maximize_{\mX \succeq 0} \; \tfrac{1}{4} \langle \mL, \mX \rangle \; \subjectto \; \diag(\mX) = \ones.  
\end{align}

On Max Cut, we compare our method against the following solvers:
\begin{enumerate}
\item \texttt{CSDP}~\cite{BorCSDP1999}: A C library for solving
\cref{eq:4} using predictor and corrector algorithm. \texttt{CSDP}
provides both primal and dual solutions.
\item \texttt{SketchyCGAL}~\cite{YTF+Scalable2021a}: A low-rank
variant of CGAL method which sketches the primal variable of CGAL
using Nystr\"om Sketching, which has great scalability for producing
moderately accurate solutions. We set the sketching size
$R = 10$ throughout all experiments.
\item \texttt{SCAMS}~\cite{PGSScalable2023}: A memory-efficient
Frank-Wolfe based solver designed specifically for \cref{eq:1}.
\item Riemannian Optimization Methods: We adopt one efficient
Riemannian optimization implementations designed for \cref{eq:1}
in~\citet{BVBNonconvex2018}, \texttt{Manopt}. They turn the
diagonal-constrained \cref{eq:1} into an unconstrained optimization
problem on the Oblique manifold
\(\text{OB}(r, n) = \{\mY \in \R^{n \times r} : \diag(\mY \mY^T) =
\ones\}\) and apply Riemannian Trust Region
method~\cite{absil2007trust}.  \texttt{Manopt} sets
\(r = \lceil \frac{\sqrt{8n+1}}{2} \rceil\) which satisfies the
Barvinok-Pataki bound.
\end{enumerate}

Because \texttt{SCAMS} requires positive degrees, we preprocess all
graphs to take the absolute value of edge weights to make sure all
edge weights are positive for Max Cut.

For a given tolerance parameter $\varepsilon$, we stop \texttt{SDPLR+},
\texttt{SketchyCGAL} and \texttt{Manopt} when their \cref{eq:18} and
\cref{eq:19} are both smaller than $\varepsilon$. \texttt{Manopt} originally
terminates based on the norm of the Riemannian gradient, we exploited
the suboptimality bound for Max Cut introduced in
\citet{BVBNonconvex2018} to let it stop based on the suboptimality. In
particular, we exponentially decay the Riemannian gradient norm
tolerance from an initial value of 10 by a factor of 5 until the
suboptimality reaches the desired accuracy $\varepsilon$. Because \texttt{SCAMS}
is solving a reformulation whose optimum is the square root of the
optimum of \cref{eq:1}, we set its tolerance as $\varepsilon/2$. As
\texttt{CSDP} is tackling the following slightly different dual
problem
\begin{align}
\label{eq:25}
\maximize_{\vlambda, \mZ \succeq 0} \; \vlambda^T \vb \; \subjectto \; \mC - \cA^*(\vlambda) = \mZ,
\end{align}
it has two more metrics besides \cref{eq:18}, dual infeasibility and relative duality gap, which are 
\begin{align}
\label{eq:28}
\frac{\| \mathcal{A}^{*}(\vlambda) - \mC - \mZ\|_{F}}{1 + \|\mC\|} \quad \text{and} \quad
 \frac{\langle\mX, \mZ\rangle}{1 + |\langle\mC, \mX\rangle| + |\vlambda^T \vb|}
\end{align}
respectively. We set their tolerance all to $\varepsilon$.  We pick the trace
bound $\alpha$ as $n$ for \texttt{SDPLR+} and \texttt{SketchyCGAL}.
Furthermore, as Max Cut has a nice hyperplane rounding algorithm to
extract discrete solutions from the SDP solutions, we compare the
quality of the rounded cuts induced by SDP solutions of different
solvers. The results are summarized in \cref{fig:max-cut}. Overall,
\texttt{SDPLR+} shows better scalability and solution quality.

\subsection{Minimum Bisection}
\label{sec:exp-minimum-bisection}
Minimum Bisection~\cite{GAREY1976237}, which minimizes the bisection width,
has numerous applications~\cite{simon1991partitioning,sanchis1989multiple}.
Formally, it aims at dividing the vertex set \(V\) into two sets
\(S, \bar{S}\) with equal size, i.e., \(|S| = |\bar{S}|\) and
minimizing \(\textbf{cut}(S, \bar{S})\). The corresponding SDP
relaxation is
\begin{align}
\label{eq:2}
\tag{Minimum Bisection}
\minimize_{\mX \succeq 0} \; \tfrac14 \langle \mL, \mX \rangle \; \subjectto \; \diag(\mX) = \ones, \ones^T \mX \ones = 0.
\end{align}
Because the original minimum bisection aims to divide the graph into
two equal-size pieces, we add one dummy isolated vertex if needed. In
addition to comparing the generic SDP solvers \texttt{SketchyCGAL} and
\texttt{CSDP}, we compare with three simple constrained Riemannian
optimization solvers implemented in~\citet{LBSimple2019}, which are
the Riemannian Augmented Lagrangian method (\texttt{RALM}), exact penalty
method with smoothing via the log-sum-exp function (\texttt{\Qlse}),
and via a pseudo-Huber loss and a linear-quadratic loss
(\texttt{\Qlqh}).  Because \texttt{RALM}, \texttt{\Qlse},
\texttt{\Qlqh} have quite complicated termination conditions, we keep
them unchanged.  We pick the trace bound $\alpha$ as $n$ for
\texttt{SDPLR+} and \texttt{SketchyCGAL}.  We also apply hyperplane
rounding to extract bisections from SDP solutions. We regard solutions
provided by \texttt{RALM}, \texttt{\Qlse}, \texttt{\Qlqh} with
objective $\frac{1}{4} \langle\mL, \mX\rangle$ larger than
$(1 + \varepsilon)$ times the size of the best minimum bisection extracted as
underoptimized and discard them. The results are summarized in
\cref{fig:minimum-bisection}. Again, we see a clear advantage to
\texttt{SDPLR+} in speed and accuracy.


\begin{figure}[t]
\captionsetup[subfigure]{justification=centering}
\centering
\subfloat[Runtime by Problem Size][Runtime by \\ Problem Size]{\label{fig:minimum-bisection-scalability}\includegraphics[width=0.28\linewidth]{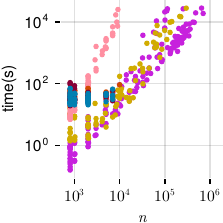}}%
\hfill
\subfloat[Performance Profile of Runtime][Performance Profile\\ of Runtime]{\label{fig:minimum-bisection-dt}\includegraphics[width=0.28\linewidth]{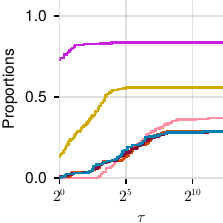}}%
\hfill
\subfloat[Performance Profile of Rounded Cuts][Performance Profile \\ of Rounded Cuts]{\label{fig:minimum-bisection-cut}\includegraphics[width=0.28\linewidth]{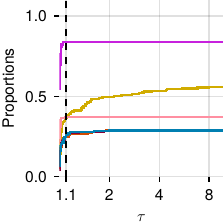}}%
\hfill
\subfloat{\includegraphics[width=0.15\linewidth]{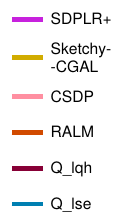}}%
\caption{Results on Minimum Bisection. We set tolerance
$\varepsilon = 10^{-2}$, and test each solver on all 202 graphs.  We time
out each solver after 8 hours.
  \cref{fig:minimum-bisection-scalability}
and \cref{fig:minimum-bisection-dt} display how the running time
of each solver scales with the problem size $n$ and the running
time performance profile plot of all the solvers respectively.
  Note that \texttt{SDPLR+} is overall the fastest solver. The
constrained Riemannian optimization methods fail on over half of
the problems. From \cref{fig:minimum-bisection-cut}, we
see \texttt{SDPLR+} often provides a better rounded minimum
bisection than \texttt{SketchyCGAL}.}
  \label{fig:minimum-bisection}
\end{figure}

\subsection{Lov\'{a}sz Theta}
The Lov\'{a}sz theta function~\cite{lovasz1979shannon} of an
undirected graph $G = (V, E)$, $\vartheta(G)$, can be computed by
\begin{align}
\label{eq:17}
\tag{Lov\'{a}sz Theta}
\vartheta(G) = \maximize_{\mX \succeq 0} \quad & \ones^T \mX \ones \\
\subjectto \quad & \Tr(\mX) = 1 \nonumber\\
& \mX_{ij} = 0\ \text{for all } ij \in E. \nonumber
\end{align}
Computing $\vartheta(G)$ reveals key characteristics of a graph
because of the sandwich result
$\alpha(G) \leq \vartheta(G) \leq \chi(\bar{G})$ where $\alpha(G)$ is the size of the
maximum independent set in $G$ and $\chi(\bar{G})$ is the chromatic
number of the complement of $G$.  We pick the trace bound $\alpha$ as
$1$ for \texttt{SDPLR+} and \texttt{SketchyCGAL}.
\cref{fig:lovasz-theta-cut-norm} summarizes results and shows
that \texttt{SketchyCGAL} is often faster than \texttt{SDPLR+} for
this SDP.

\subsection{Cut Norm}
The cut norm is relevant to a variety of graph and matrix
problems~\cite{alon1994algorithmic,frieze1999quick}. For any
$\mA \in \R^{m \times n}$, its cut norm is defined as
\begin{align*}
\|\mA\|_{\text{cut}} = \max_{S \subseteq [m], T \subseteq [n]} \sizeof{ \sum_{i \in S} \sum_{j \in T} 
\mA_{ij}}.
\end{align*}
While computing the cut norm is not tractable, Alon and
Naor gave an approximation algorithm based on solving the
following SDP and rounding its solution~\cite{ANApproximating2004}
\begin{align}
\label{eq:6}
\tag{Cut Norm}
\maximize_{\mX} \; \tfrac{1}{2}\langle
\bigl[ \begin{smallmatrix}
\zeros & \mA \\
\mA^T & \zeros
\end{smallmatrix} \bigr],
\mX \rangle \; 
\subjectto \; \diag(\mX) = \ones.
\end{align}
We compare \texttt{SDPLR+} against \texttt{SketchyCGAL} and
\texttt{CSDP}.  We pick the trace bound $\alpha$ as $m+n$ for
\texttt{SDPLR+} and \texttt{SketchyCGAL}.  The results are summarized
in \cref{fig:lovasz-theta-cut-norm}, which show that
\texttt{SDPLR+} is often much faster than \texttt{SketchyCGAL} for
this SDP.


\begin{figure}[t]
\captionsetup[subfigure]{justification=centering}
\centering
\subfloat[Lov\'{a}sz Theta \\ Runtime by Prob.~Size][Lov\'{a}sz Theta \\ Runtime by \\ Problem Size]{\includegraphics[width=0.21\linewidth]{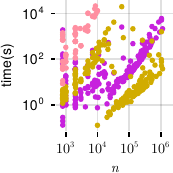}}%
\hfill
\subfloat[Lov\'{a}sz Theta \\Runtime Perf.~Prof.][Lov\'{a}sz Theta \\Runtime Perf. Prof.]{\includegraphics[width=0.21\linewidth]{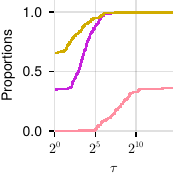}}%
\hfill
\subfloat[Cut Norm \\ Runtime by Prob.~Size][Cut Norm \\ Runtime by \\ Problem Size]{\includegraphics[width=0.21\linewidth]{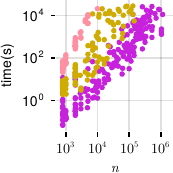}}%
\hfill
\subfloat[Cut Norm \\ Runtime Perf.~Prof.][Cut Norm \\ Runtime Perf. Prof.]{\includegraphics[width=0.21\linewidth]{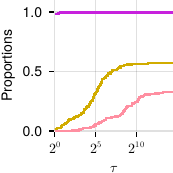}}%
\hfill
\subfloat{\includegraphics[width=0.12\linewidth]{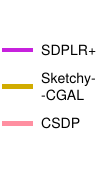}}
\caption{Results on Lov\'{a}sz Theta and Cut Norm.}
  \label{fig:lovasz-theta-cut-norm} 
\end{figure}

\subsection{Ablation studies}
We perform three ablation
studies. \Cref{sec:sdplr-vs-sdplr-plus} shows how the changes
in \texttt{SDPLR+} compare to \texttt{SDPLR} in runtime. As would be
expected, \texttt{SDPLR+} is faster on the vast majority of
instances. \Cref{sec:fixed-rank-study} studies a number of
solvers with a fixed \emph{rank} or fixed \emph{sketch size}. This
eliminates the dynamic behavior in some of the solvers.

\subsubsection{SDPLR vs. SDPLR+}
\label{sec:sdplr-vs-sdplr-plus}

We perform some simple comparison experiments to demonstrate
\texttt{SDPLR+} is faster than the original \texttt{SDPLR}. We let
them both solve Max Cut on \gset, which contains 71 small and moderate
graphs. We terminate \texttt{SDPLR+} once both \cref{eq:18} and
\cref{eq:19} are smaller than $\varepsilon$ and \texttt{SDPLR} once \cref{eq:18}
is smaller than $\varepsilon$. We slightly modified \texttt{SDPLR} to compute
the \cref{eq:18} in a Euclidean scaling for fair comparison. The
results are summarized in \cref{fig:sdplr-vs-sdplrplus}. As we can
see, \texttt{SDPLR+} is faster on the vast majority of instances.

\begin{figure}[htb]
\centering
\hfill
\subfloat[Runtime by Problem Size]{\includegraphics[width=0.32\linewidth]{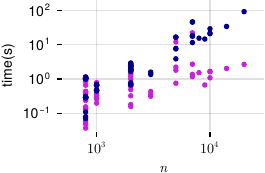}}%
\hfill
\subfloat[Runtime Perf. Prof.]{\includegraphics[width=0.32\linewidth]{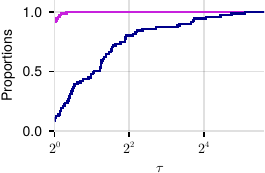}}%
\hfill
\subfloat{\includegraphics[width=0.18\linewidth]{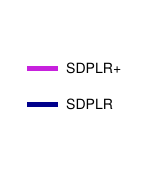}}%
\hfill
\caption{Comparing \texttt{SDPLR+} against \texttt{SDPLR} for
solving Max Cut on \gset graphs.}
  \label{fig:sdplr-vs-sdplrplus}
\end{figure}

\subsubsection{Higher Precision}
In practical applications, speed and scalability naturally trade off against high precision.
Here we study how fast and scalable
\texttt{SDPLR+} is when the precision requirement is higher; in particular
we tighten both the primal infeasibility and suboptimality tolerances
to $10^{-4}$ and $10^{-6}$. \Cref{fig:high-precision-max-cut},
\Cref{fig:high-precision--minimum-bisection} and
\Cref{fig:high-precision-lovasz-theta-cut-norm} summarize the results.

We observe that \texttt{SDPLR+} is still the fastest and most scalable
solver on Minimum Bisection and Cut Norm. On Max Cut, \texttt{SDPLR+}
is quite competitive when $\varepsilon=10^{-4}$ but \texttt{Manopt} has better
performance when $\varepsilon = 10^{-6}$. \texttt{SDPLR+} struggles on finding
high-precision solutions on Lov\'{a}sz Theta. Also, we see that
\texttt{CSDP} has relatively stable performance across different
precisions and it mostly fails on instances requiring too much memory.

\begin{figure}[p]
\captionsetup[subfigure]{justification=centering}
\centering
\subfloat[Runtime by Problem Size][Runtime by \\Problem Size]{\label{fig:max-cut-scalability-tol-0.0001}\includegraphics[width=0.28\linewidth]{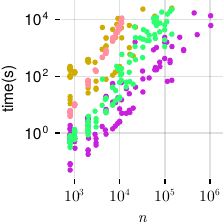}}%
\hfill
\subfloat[Performance Profile of Runtime][Performance Profile \\ of Runtime]{\label{fig:max-cut-running-time-tol-0.0001}\includegraphics[width=0.28\linewidth]{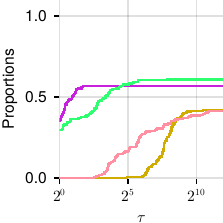}}%
\hfill
\subfloat[Performance Profile of Rounded Cuts][Performance Profile \\ of Rounded Cuts]{\label{fig:max-cut-cuts-tol-0.0001}\includegraphics[width=0.28\linewidth]{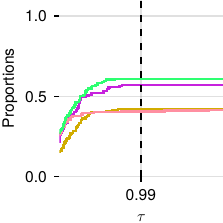}}%
\hfill
\subfloat{\includegraphics[width=0.15\linewidth]{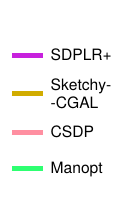}} \\
\centering
\subfloat[Runtime by Problem Size][Runtime by \\Problem Size]{\label{fig:max-cut-scalability-tol-1e-6}\includegraphics[width=0.28\linewidth]{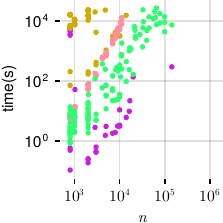}}%
\hfill
\subfloat[Performance Profile of Runtime][Performance Profile \\ of Runtime]{\label{fig:max-cut-running-time-tol-1e-6}\includegraphics[width=0.28\linewidth]{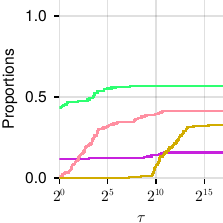}}%
\hfill
\subfloat[Performance Profile of Rounded Cuts][Performance Profile \\ of Rounded Cuts]{\label{fig:max-cut-cuts-tol-1e-6}\includegraphics[width=0.28\linewidth]{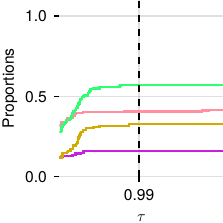}}%
\hfill
\subfloat{\includegraphics[width=0.15\linewidth]{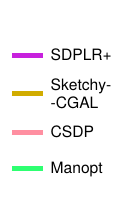}}
\caption{Results on Max Cut for $\varepsilon=10^{-4}$ (top) and
$\varepsilon=10^{-6}$ (bottom).  We repeat the experiments in
\Cref{sec:exp-max-cut} for each solver on all 202 graphs but with
higher precision. \texttt{SCAMS} is not included because of
runtime problems for high precision solves. We observe that
\texttt{SDPLR+} is quite competitive when
$\varepsilon = 10^{-4}$.}
  \label{fig:high-precision-max-cut}
\end{figure}

\begin{figure}[p]
\captionsetup[subfigure]{justification=centering}
\centering
\subfloat[Runtime by Problem Size][Runtime by \\Problem Size]{\label{fig:minimum-bisection-scalability-tol-0.0001}\includegraphics[width=0.28\linewidth]{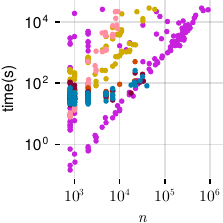}}%
\hfill
\subfloat[Performance Profile of Runtime][Performance Profile \\ of Runtime]{\label{fig:minimum-bisection-running-time-tol-0.0001}\includegraphics[width=0.28\linewidth]{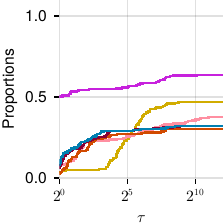}}%
\hfill
\subfloat[Performance Profile of Rounded Cuts][Performance Profile \\ of Rounded Cuts]{\label{fig:minimum-bisection-cuts-tol-0.0001}\includegraphics[width=0.28\linewidth]{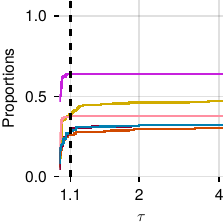}}%
\hfill
\subfloat{\includegraphics[width=0.15\linewidth]{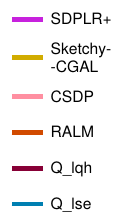}} \\
\subfloat[Runtime by Problem Size][Runtime by \\Problem Size]{\label{fig:minimum-bisection-scalability-tol-1e-6}\includegraphics[width=0.28\linewidth]{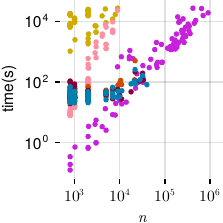}}%
\hfill
\subfloat[Performance Profile of Runtime][Performance Profile \\ of Runtime]{\label{fig:minimum-bisection-running-time-tol-1e-6}\includegraphics[width=0.28\linewidth]{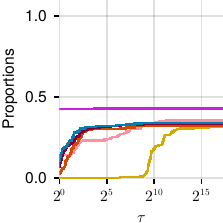}}%
\hfill
\subfloat[Performance Profile of Rounded Cuts][Performance Profile \\ of Rounded Cuts]{\label{fig:minimum-bisection-cuts-tol-1e-6}\includegraphics[width=0.28\linewidth]{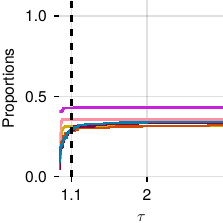}}%
\hfill
\subfloat{\includegraphics[width=0.15\linewidth]{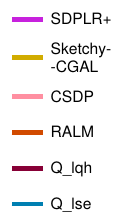}}%
\caption{Results on Minimum Bisection for higher precision
$\varepsilon = 10^{-4}$ (top) and $\varepsilon = 10^{-6}$ (bottom). We repeat the
experiments in \Cref{sec:exp-minimum-bisection} for each solver on
all 202 graphs but with higher precision. We include the results
of \texttt{RALM}, \texttt{\Qlqh}, \texttt{\Qlse} for conceptual
comparison but note that they have a quite different and
complicated set of termination conditions.  We did not touch their
code and the results of them are identical to those shown in
\Cref{fig:minimum-bisection}. We can see that \texttt{SDPLR+} has
a clear advantage on this problem even if precision is high.}
  \label{fig:high-precision--minimum-bisection}
\end{figure}

\begin{figure}[p]
\captionsetup[subfigure]{justification=centering}
\centering
\subfloat[Lov\'{a}sz Theta \\ Runtime by Prob.~Size][Lov\'{a}sz Theta \\ Runtime by \\ Problem Size]{\includegraphics[width=0.21\linewidth]{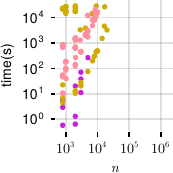}}%
\hfill
\subfloat[Lov\'{a}sz Theta \\Runtime Perf.~Prof.][Lov\'{a}sz Theta \\Runtime Perf. Prof.]{\includegraphics[width=0.21\linewidth]{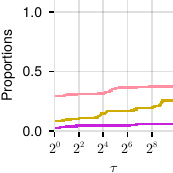}}%
\hfill
\subfloat[Cut Norm \\ Runtime by Prob.~Size][Cut Norm \\ Runtime by \\ Problem Size]{\includegraphics[width=0.21\linewidth]{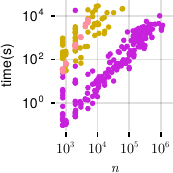}}%
\hfill
\subfloat[Cut Norm \\ Runtime Perf.~Prof.][Cut Norm \\ Runtime Perf. Prof.]{\includegraphics[width=0.21\linewidth]{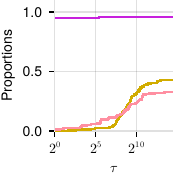}}%
\hfill
\subfloat{\includegraphics[width=0.12\linewidth]{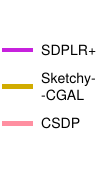}} \\
\subfloat[Lov\'{a}sz Theta \\ Runtime by Prob.~Size][Lov\'{a}sz Theta \\ Runtime by \\ Problem Size]{\includegraphics[width=0.21\linewidth]{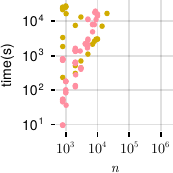}}%
\hfill
\subfloat[Lov\'{a}sz Theta \\Runtime Perf.~Prof.][Lov\'{a}sz Theta \\Runtime Perf. Prof.]{\includegraphics[width=0.21\linewidth]{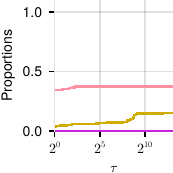}}%
\hfill
\subfloat[Cut Norm \\ Runtime by Prob.~Size][Cut Norm \\ Runtime by \\ Problem Size]{\includegraphics[width=0.21\linewidth]{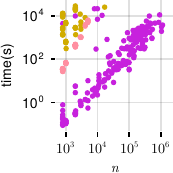}}%
\hfill
\subfloat[Cut Norm \\ Runtime Perf.~Prof.][Cut Norm \\ Runtime Perf. Prof.]{\includegraphics[width=0.21\linewidth]{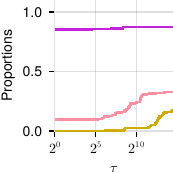}}%
\hfill
\subfloat{\includegraphics[width=0.12\linewidth]{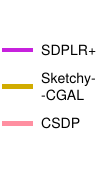}}
\caption{Results on Lov\'{a}sz Theta and Cut Norm when $\varepsilon=10^{-4}$ (top) and $\varepsilon=10^{-6}$ (bottom).}
  \label{fig:high-precision-lovasz-theta-cut-norm} 
\end{figure}

\subsubsection{Impact of Rank Parameter and Sketch Size}
\label{sec:fixed-rank-study}
Recall that \texttt{SDPLR+} and \texttt{Manopt} have a parameter $r$
that controls the rank of the low-rank factors they optimize, and
\texttt{SketchyCGAL} has a parameter $R$ that controls the size of the
sketch $\mS \in \R^{n \times R}$ they store.  We compare the performance of
\texttt{SDPLR+}, \texttt{Manopt} and \texttt{SketchyCGAL} on Max Cut
with various rank parameters and sketch sizes and explore how these
parameters affect their convergence speed to moderately accurate
solutions.  For simplicity, we let the rank parameter and the sketch
size be the same. We slightly modified the Max Cut code of
\texttt{Manopt} to achieve different ranks. The results are summarized
in \cref{fig:rank-sketch-size}.

\begin{figure}[t]
\captionsetup[subfigure]{justification=centering}
\centering
\subfloat[Runtime by Size \\ Rank 50]{\label{fig:rank-sketch-size-scalability}\includegraphics[width=0.24\linewidth]{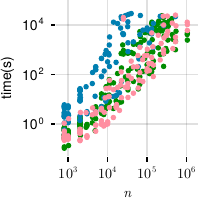}}%
\hfill
\subfloat[Runtime Perf. Prof. Rank 50]{\label{fig:rank-sketch-size-performance-plot}\includegraphics[width=0.24\linewidth]{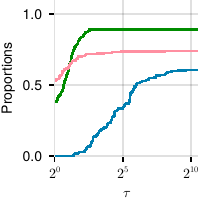}}%
\hfill
\subfloat[Runtime Perf. Prof. for both 10, 50]{\label{fig:rank-sketch-size-performance-plot-10-50}\includegraphics[width=0.24\linewidth]{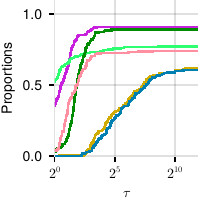}}%
\hfill
\subfloat{\includegraphics[width=0.12\linewidth]{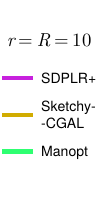}}%
\hfill
\subfloat{\includegraphics[width=.12\linewidth]{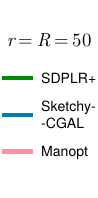}}%
\caption{Results on investigation of rank parameters and sketch
sizes.  We observe that if we decrease the rank parameter used
in \texttt{Manopt}, it has comparable performance against
\texttt{SDPLR+}, although \texttt{SDPLR+} remains the fastest
solver. When $r = R = 10$, the runtime by size and the runtime
performance plot are similar to
\cref{fig:rank-sketch-size-scalability} and
\cref{fig:rank-sketch-size-performance-plot}, so we omit them.
  From the performance plot
\cref{fig:rank-sketch-size-performance-plot-10-50}, we can see
that in general, a higher rank parameter or sketch size will
result in longer running time for moderate accuracy.}
\label{fig:rank-sketch-size}
\end{figure}

\clearpage
\section{Data Science Applications}
\Cref{sec:experiments} benchmarks \SDPLRp against many other solvers
on four classic graph-related SDPs.  In this section, we present a brief study applying \SDPLRp to broader data science
applications, including graph low-conductance cuts (\Cref{sec:graph-low-cond}), matrix completion (\Cref{sec:matrix-completion}) and $k$-means clustering (\Cref{sec:kmeans}).

\subsection{Graph Low Conductance Cuts}
\label{sec:graph-low-cond}

Low-conductance cut is a popular objective in network science
\citep{DBLP:journals/jct/AlonM85}, which finds a vertex set $S$ that
minimizes
\begin{align*}
\phi(S) \defeq \frac{\cut(S, \bar{S})}{\min \{\vol(S), \vol(\bar{S})\}}.
\end{align*}
By dividing the cut size by the volume of the smaller side, this
objective yields a more balanced, sparse cut than the simple minimum
cut objective. Recently, Huang, Seshadhri, and Gleich studied a
size-specific variant of conductance, called $\mu$-conductance, that
disregards sets with volume smaller than $\mu \vol(G)$
\cite{HSGTheoretical2023,huang23:_cheeg_inequal_size_specif_conduc}. In
other words,
\begin{align*}
\phi_{\mu}(G) = \min_{S} \phi(S) \quad \text{subject to} \quad \mu \vol(G) \leq \vol(S) \leq (1 - \mu) \vol(G).
\end{align*}
By varying $\mu$, $\mu$-conductance can reveal a richer cut structure of
the graph, and when $\mu = 0$, it reduces to the standard conductance.
However, $\mu$-conductance is $\mathsf{NP}$-hard to compute; existing
approximation algorithms can only give upper bounds. To bound
$\mu$-conductance from below, Huang, Seshadhri, and Gleich proposed
the following SDP relaxation.
\begin{align}
\label{eq:mu-cond}
\tag{$\mu$-conductance}
\minimize_{\mX \succeq 0} \quad & \Tr(\mL\mX) \\
\text{subject to} \quad& \Tr(\mD\mX) = 1 \nonumber \\
& \vd \mX \vd^T = 0 \nonumber\\
& \diag(\mX) \le \frac{1 - \mu}{\mu \vol(G)} \ones \nonumber\\
& \diag(\mX) \geq \frac{\mu}{(1 - \mu) \vol(G)} \ones. \nonumber
\end{align}
This relaxation gives a lower bound for $\mu$-conductance. Together with
the upper bound, one can sandwich the true $\mu$-conductance.

To handle the inequalities in this SDP relaxation, we extend our
augmented Lagrangian to the classical form for inequality-constrained
problems. We still use L-BFGS to find stationary points, but replace
exact line search with Armijo line search due to nonsmoothness.

We compare the lower bounds computed by \SDPLRp with the results from
\citet{HSGTheoretical2023}. For a fair comparison, we terminate the
solver based on the absolute tolerances of \citet{HSGTheoretical2023}.
Specifically, our primal infeasibility termination condition for
equality constraints $\mathcal{A}(\mX) = \vb$ is $\norm{\mathcal{A}(\mY\mY^T) - \vb}_{\infty} \leq
10^{-5}$, and for inequality constraints $\mathcal{A}(\mX) \leq \vb$ is
$\norm{\max(\mathcal{A} (\mY\mY^T) - \vb, 0)}_{\infty} \leq 10^{-5}$. Unlike
\citet{HSGTheoretical2023}, since \SDPLRp supports termination based
on suboptimality gap rather than purely on stationarity, we set the 
relative suboptimality tolerance to $10^{-1}$. Moreover, we scale all constraints so that
the left-hand side matrices $\mA_i$ satisfy $\|\mA_i\|_F = \|\mD\|_{F}$;
this change results in a tighter feasible region in practice.
Moreover, we also provide a comparison against \SketchyCGAL.  Because
\SketchyCGAL does not support early termination when the SDP instances
contain inequalities, we turn \eqref{eq:mu-cond} into the 
canonical form without inequalities. In particular, we introduce nonnegative slack variables $\vs, \vt$ such that
\begin{align*}
  &\diag(\mX) + \vs = \frac{(1 - \mu) \ones}{\mu \vol(G)} \\
  &                   \vs + \vt = \frac{(1 - \mu) \ones}{ \mu \vol(G)} - \frac{\mu \ones}{(1 - \mu) \vol(G)} 
\end{align*}
Putting them onto the diagonal of the decision matrix $\mX$ gives the following SDP,
\begin{align}
  \label{eq:mu-cond-eq}
  \tag{$\mu$-conductance-equality}
  \minimize_{\mX \succeq 0} & \quad \langle
                        \begin{bmatrix}
           \mL & \zeros & \zeros  \\
           \zeros &  \zeros & \zeros \\
           \zeros &  \zeros & \zeros                           
                        \end{bmatrix},
         \mX \rangle \\ 
  \subjectto & \quad \langle
 \begin{bmatrix}
           \mD & \zeros & \zeros  \\
           \zeros & \zeros & \zeros  \\
           \zeros & \zeros & \zeros 
 \end{bmatrix},
               \mX \rangle = 1 \nonumber \\
  & \quad
  \langle
         \begin{bmatrix}
           \vd \vd^T & \zeros & \zeros  \\
           \zeros & \zeros & \zeros \\
           \zeros & \zeros & \zeros 
        \end{bmatrix}, 
        \mX \rangle = 0 ,
 \nonumber  \\
                      & \quad \langle
                        \begin{bmatrix}
           \ve_i \ve_i^T & \zeros & \zeros  \\
           \zeros & \ve_i \ve_i^T & \zeros \\
           \zeros & \zeros & \zeros                           
                        \end{bmatrix},
        \mX \rangle = \frac{1 - \mu}{ \mu \vol(G) } , \quad \forall i \in [n] \nonumber \\
 & \quad  \langle 
   \begin{bmatrix}
           \zeros & \zeros & \zeros \\
           \zeros & \ve_i \ve_i^T & \zeros \\
           \zeros & \zeros & \ve_i \ve_i^T 
   \end{bmatrix}
     ,
         \mX \rangle = \frac{1 - 2\mu}{\mu(1 - \mu) \vol(G)}, \quad \forall i \in [n]\nonumber 
\end{align} where $\ve_i$ is the $i$-th standard basis and the
decision variable $\mX$ is of size $3n \times 3n$.  Both \eqref{eq:mu-cond}
and \eqref{eq:mu-cond-eq} have explicit trace bounds that can be
derived from constraints. For \SketchyCGAL, we also set the relative
suboptimality tolerance to $10^{-1}$, same with \SDPLRp.

The results are summarized in \cref{fig:mu-conductance}. For the Network Community Profile (NCP) plots, following \cite{HSGTheoretical2023}, we smooth the lower bounds at different $\mu$s and make sure the curve for each solver is non-decreasing. Part of the lower bound curve for \SketchyCGAL is missing because the returned lower bounds are negative, which is meaningless for lower bounding $\mu$-conductance. In the scalability and time performance profile, we consider any solve longer than 8 hours or not reaching desired accuracy failed, even if it provides a non-trivial lower bound.

We remark that the original lower bound from
\citet{HSGTheoretical2023} was computed by adding $\alpha
\min\setof{\lambda_{\min} (\mC - \cA^*(\vlambda)), 0}$ to the current primal
value $\langle\mL, \mX\rangle$, which implicitly assumes that $\mX$ has satisfied
all KKT conditions. We reran those experiments with the more rigorous
lower bound
\[\vlambda^T \vb + \alpha \min\setof{\lambda_{\min} (\mC - \cA^*(\vlambda)), 0},\]
so our lower bound curves may fall below those in
\citet{HSGTheoretical2023}.

\begin{figure}[t]
    \captionsetup[subfigure]{justification=centering}
  \centering
  \subfloat[\textsc{ca-AstroPh}]{\label{fig:ca-astroph}\includegraphics[width=0.25\linewidth]{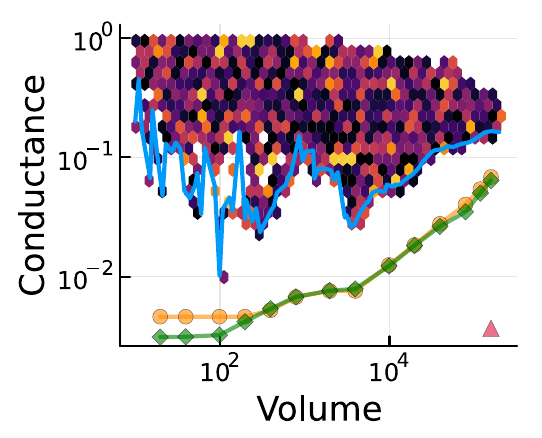}}%
  \hfill  \subfloat[\textsc{ca-HepPh}]{\label{fig:ca-hepph}\includegraphics[width=0.25\linewidth]{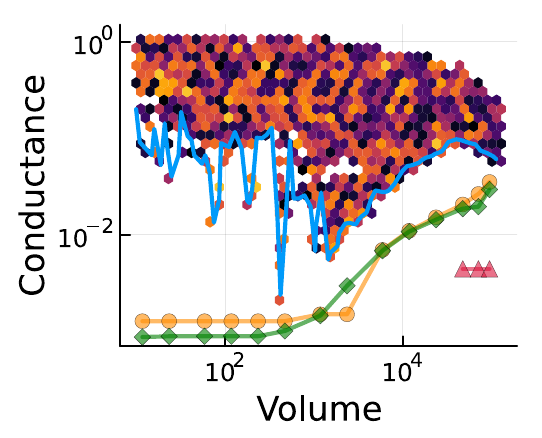}}%
  \hfill  \subfloat[\textsc{email-Enron}]{\label{fig:email-Enron}\includegraphics[width=0.25\linewidth]{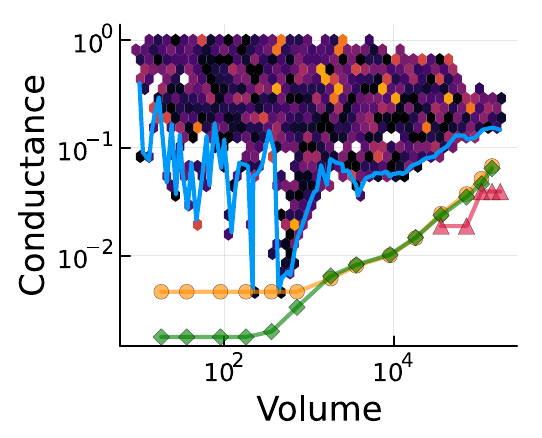}}%
  \hfill  \subfloat[\textsc{Scalability}]{\label{fig:mu-cond-scal}\includegraphics[width=0.25\linewidth]{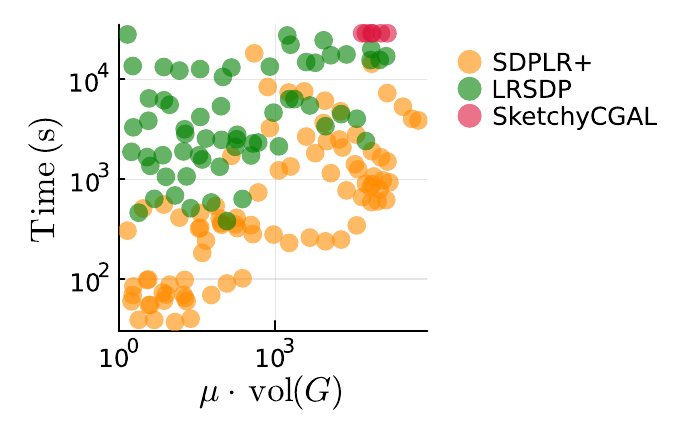}}%
  \\ \subfloat[\textsc{deezer}]{\label{fig:deezer}\includegraphics[width=0.25\linewidth]{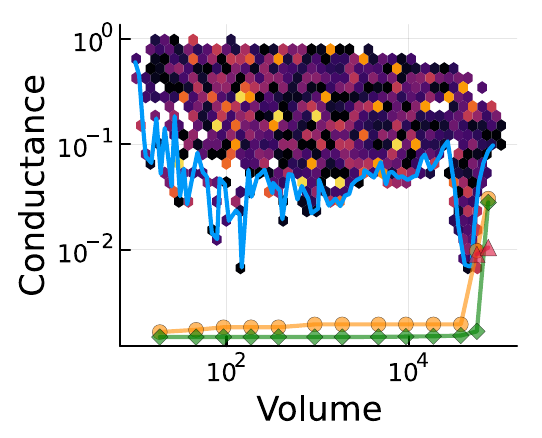}}%
  \hfill  \subfloat[\textsc{facebook}]{\label{fig:facebook}\includegraphics[width=0.25\linewidth]{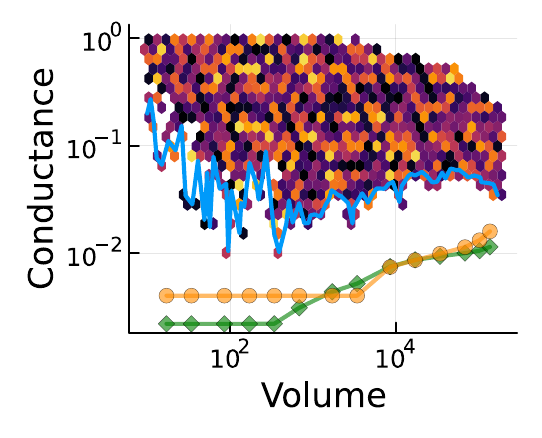}}%
  \hfill  \subfloat[\textsc{dblp}]{\label{fig:dblp}\includegraphics[width=0.25\linewidth]{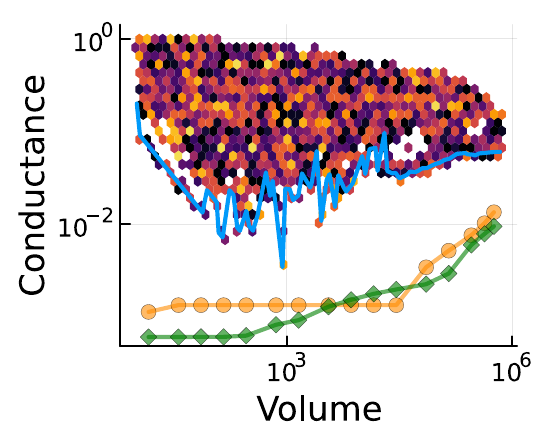}}%
  \hfill  \subfloat[\textsc{time profile}]{\label{fig:mu-cond-time-profile}\includegraphics[width=0.25\linewidth]{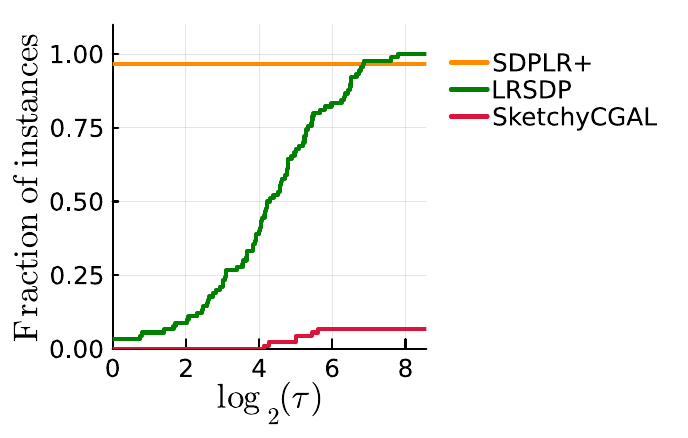}}%
  \caption{Comparison of \SDPLRp (yellow) and \LRSDP (green), the
solver from \citet{HSGTheoretical2023} for \eqref{eq:mu-cond}, and
\SketchyCGAL (red), on six social networks. The largest graph,
\textsc{dblp}, contains roughly 200K vertices and 700K edges, which
gives an SDP with $\mX$ of size $2e5 \times 2e5$ and $\nnz(\mL) = 7e5$.
\LRSDP also uses the Burer-Monteiro factorization, but handles
inequalities via slack variables and L-BFGS-B, unlike \SDPLRp which
handles them directly via ALM. \LRSDP also does not dynamically adjust
the rank or terminate based on the duality gap.  The heatmaps show the
conductances of $10^5$ cuts found by ACL localized PageRank
\citep{DBLP:conf/focs/AndersenCL06}. The blue curves are the
smallest-conductance cuts at each volume, giving upper bounds on the
true $\mu$-conductance.  The yellow, green and red curves are lower
bounds given by \SDPLRp, \LRSDP and \SketchyCGAL
respectively. \Cref{fig:mu-cond-scal} shows the scalability of the two
solvers (running time vs.\ $\mu \times \vol(G)$), and
\Cref{fig:mu-cond-time-profile} shows their performance profiles.
Apart from a few failures, \SDPLRp is much faster than \LRSDP and
\SketchyCGAL on most runs. Since we scale the constraints in
\eqref{eq:mu-cond}, \SDPLRp often yields tighter lower bounds, as
visible in the curves.}
\label{fig:mu-conductance}
\end{figure}

\subsection{Matrix Completion SDP}
\label{sec:matrix-completion}

Matrix completion recovers a low-rank matrix $\mM \in \R^{n \times p}$
from a subset $\Omega$ of its observed entries, where $m = |\Omega|$
denotes the number of observations. A canonical application
is collaborative filtering, where $\mM$ contains user-item ratings and
most entries are missing \citep{Koren2009}.
\citet{CandesRecht2009, CandesTao2010} showed that, under mild conditions on the
distribution of the missing entries, minimizing the nuclear norm $\|\mX\|_*$
subject to the observed entries recovers $\mM$ exactly. This can be cast as the following
SDP~\citep{RechtFazelParrilo2010}:
\begin{align}
  \label{eq:matrix-completion-sdp}
  \tag{Matrix Completion}
  \minimize_{\mX \in \R^{n \times p}}
  \quad & \tfrac{1}{2}\bigl(\Tr(\mW_1) + \Tr(\mW_2)\bigr) \\
  \text{subject to} \quad &
  \begin{pmatrix} \mW_1 & \mX \\ \mX^T & \mW_2 \end{pmatrix} \succeq 0
  \nonumber \\
  & \mX_{ij} = \mM_{ij} \quad \forall\, (i, j) \in \Omega. \nonumber
\end{align}
The objective minimizes $\|\mX\|_*$ because $\|\mX\|_* =
\min_{\mW_1,\mW_2} \tfrac{1}{2}(\Tr(\mW_1) + \Tr(\mW_2))$ subject
to the block PSD constraint.
When $\Omega$ is sampled uniformly at random, $m = O(Nr\log^{O(1)} N)$
observations suffice for exact recovery with high probability, where $r$ is the rank of $\mM$
and $N = \max\{n, p\}$ \citep{CandesTao2010}.

We apply \SDPLRp to \eqref{eq:matrix-completion-sdp}. Since
\eqref{eq:matrix-completion-sdp} does not include an explicit trace
bound, we use the trace bound
$ 2\sqrt{\min\{n, p\}} \|\mM_{\Omega}\|_F$, where
$\mM_{\Omega}$ is the matrix with the observed entries in $\Omega$ and zeros
elsewhere. This is a valid trace bound because the objective of \eqref{eq:matrix-completion-sdp}
is $\|\mX\|_{*}$, the optimal $\mX$ must have a trace upper bounded by any feasible solution.
As a result the optimal $\mX$ has trace at most $\|\mM_{\Omega}\|_{*} \leq 2\sqrt{\min\{n, p\}} \|\mM_{\Omega}\|_F $. We generate synthetic square rank-$r$ matrices
$\mM \in \R^{n \times n}$ via the factorization $\mM = \mU\mV^T$, where
$\mU, \mV \in \R^{n \times r}$ have i.i.d.\ $\mathcal{N}(0, 1/r)$ entries, and draw
$m$ observations by uniform sampling without replacement.

We test six values of $n$ ranging from $10^4$ to $10^6$, ranks $r \in \{1, 3, 5\}$,
and ratios $m/n \in \{10, 50, 100, 500, 1000\}$. Excluding cases where $m > 10^7$,
this yields 60 SDPs in total. We set both the primal infeasibility and suboptimality tolerances to $10^{-3}$ and give each problem instance four hours.

On this problem, we mainly compare against \SketchyCGAL as \CSDP runs into memory issues for these large-scale SDPs. It turns out \SketchyCGAL fails to achieve the desired $10^{-3}$ accuracy on all instances. The results are summarized in \Cref{fig:matrix-completion}.
\begin{figure}[t]
    \captionsetup[subfigure]{justification=centering}
  \centering
  \subfloat[$r = 1$]{\label{fig:mc_time_rank_1}\includegraphics[width=0.28\linewidth]{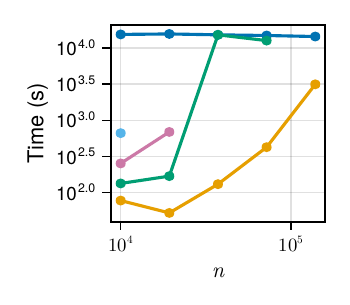}}
  \subfloat[$r = 3$]{\label{fig:mc_time_rank_3}\includegraphics[width=0.28\linewidth]{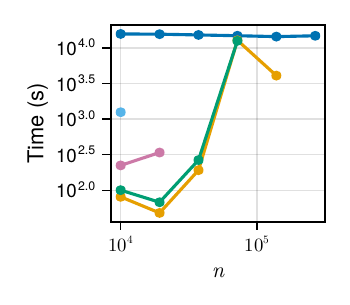}}
  \subfloat[$r = 5$]{\label{fig:mc_time_rank_5}\includegraphics[width=0.28\linewidth]{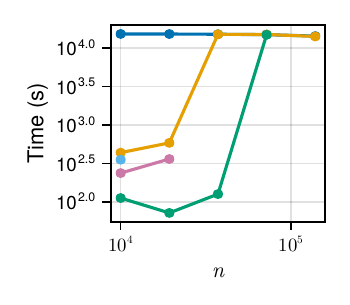}}
\subfloat{\label{fig:mc_time_rank_5}\includegraphics[width=0.15\linewidth]{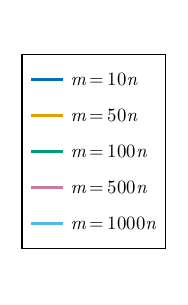}}
  \\
  \subfloat[$r = 1$]{\label{fig:mc_relerr_rank_1}\includegraphics[width=0.28\linewidth]{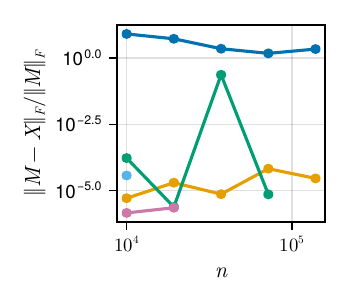}}
  \subfloat[$r = 3$]{\label{fig:mc_relerr_rank_3}\includegraphics[width=0.28\linewidth]{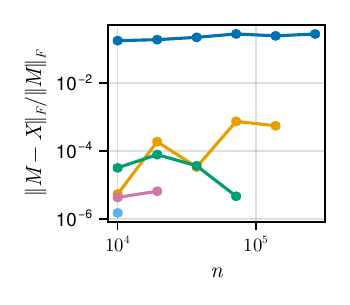}}
  \subfloat[$r = 5$]{\label{fig:mc_relerr_rank_5}\includegraphics[width=0.28\linewidth]{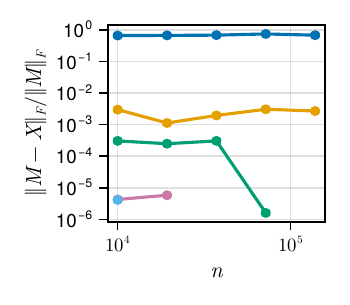}}
\subfloat{\label{fig:mc_time_rank_5}\includegraphics[width=0.15\linewidth]{figs/mc_legend.pdf}}
\caption{\SDPLRp applied to \eqref{eq:matrix-completion-sdp} on
  synthetic data. \SketchyCGAL fails to achieve the desired $10^{-3}$
  accuracy on all problem instances. The first row shows running time
  and the second row shows relative error
  $\|\mM - \mX\|_F / \|\mM\|_F$, for each rank $r \in \{1, 3, 5\}$.  Eight
  of the 60 instances ran out of RAM and are excluded.  When $m/n$ is
  too small, \SDPLRp fails to recover $\mM$ exactly, consistent with
  the theory. Once $m/n$ exceeds a threshold (around $100$ here), the
  running time scales with $m$. Higher rank requires slightly more
  time and more observations for reliable recovery. More observations
  also lead to smaller relative error $\|\mM - \mX\|_F /
  \|\mM\|_F$. \SDPLRp takes 354 seconds on
  $(n, m/n, r) = (10^4, 10^3, 5)$ and 3145 seconds on
  $(n, m/n, r) = (138950, 50, 1)$.}
\label{fig:matrix-completion}
\end{figure}

\section{Limitations and Conclusions}
\label{sec:conclusion}

\paragraph{Limitations} Empirical studies of solvers are always
challenging. Do we pick optimal parameters for each solver via
hyperparameter search or use default options? In this case,
we chose to use default options. To address some of the limitations
that this imposes, we did a fixed rank study in
\cref{sec:fixed-rank-study} that compares the solvers in a directly
comparable parameter regime.  Performance profile plots also can be
misleading~\cite{Gould2016}. This is why we show both runtime and size
as well -- which typically show the same trend. For one recent
solver, we were unable to obtain an implementation to
compare~\cite{monteiro2024low} and data from the paper is from a very
different accuracy regime. For another recent
solver, \citet{angell2024fast}, we encountered issues when trying to
complete a full evaluation. In this case, we present a comparison
against values from their paper in \cref{sec:extra-solvers}.

Although we target \emph{trace-bounded SDPs}, this is a very general
setting as many problems have easy-to-compute bounds on the maximum
possible trace of the solution. For those that don't, doubling
approaches could be used to scale the upper bound through a variety of
choices.

\paragraph{Conclusion}
The research field has made tremendous progress on solving large
SDPs. This paper continues this thread of research and proposes a new
solver for trace-bounded SDPs that adds a suboptimality bound to
\texttt{SDPLR} to produce the improved solver \texttt{SDPLR+}. We
were impressed by the gains from this straightforward
modification. This further supports the idea that there
remain significant potential speed improvements in SDP solvers.

\newpage
\appendix

\section{Connections between Different Suboptimality Bounds}
\label{sec:suboptimality-other}
For completeness, we include a concise proof for the surrogate duality
bound from~\citet{YTF+Scalable2021a}. We slightly adjust their proof
because the compact domain we consider is
$\Delta_{\alpha} = \{\mX \succeq 0: \Tr(\mX) \leq \alpha\}$, which is slightly different from
$\{\mX \succeq 0: \Tr(\mX) = \alpha\}$. The adjustment is straightforward. For
convenience, we let $p(\mX) \defeq \mathcal{A}(\mX) - \vb$ denote the primal
violation of $\mX$.

They consider minimizing the following augmented
Lagrangian
\begin{align}
\label{eq:20}
  f(\mX) = \langle \mC, \mX \rangle - \vlambda^T p(\mX) + \frac{\sigma}{2} \|p(\mX)\|^2 
\end{align}
with $\sigma > 0$ over $\Delta_{\alpha}$. Let
$\vlambda' = \vlambda - \sigma p(\mX)$, we observe that 
$\nabla f(\mX) = \mC - \mathcal{A}^*(\vlambda')$ and as a result
\begin{align}
  \label{eq:31}
  \langle \nabla f(\mX), \mX \rangle - f(\mX) &= - (\vlambda')^T \cA(\mX) + \vlambda^T p(\mX) - \frac{\sigma}{2} \| p(\mX)\|^2 \\
                             &=  - (\vlambda')^T \cA(\mX) + (\vlambda' + \sigma p(\mX))^T p(\mX) - \frac{\sigma}{2} \| p(\mX)\|^2 \nonumber\\
   &= - \vb^T \vlambda' + \frac{\sigma}{2} \| p(\mX)\|^2 \nonumber.
\end{align}

Because \cref{eq:20} is convex, we have the
following surrogate duality bound by applying the surrogate duality
bound in Frank-Wolfe~\cite{JagRevisiting} to $f$,
\begin{align}
  \label{eq:24}
  \max_{\mH \in \Delta_{\alpha}} \langle \nabla f(\mX), \mX - \mH \rangle & \geq f(\mX) - \min_{\mZ \in \Delta_{\alpha}} f(\mZ) \\
                                             &  \geq f(\mX) - f(\mX^{*}) 
                                               = f(\mX) - \langle\mC, \mX^{*} \rangle  \nonumber
\end{align}
where \(\mX^*\) is one optimal solution for \cref{eq:3}.  The second
inequality holds because
\(f(\mX^*) \geq \min_{\mZ \in \Delta_{\alpha}} f(\mZ) \) and the last equality holds due
to \(\mX^*\) is feasible, i.e. $p(\mX^{*}) = 0$.

Since optimizing a linear function over $\Delta_{\alpha}$ can be turned into evaluating one extreme
eigenvalue, we have 
\begin{align}
  \label{eq:30}
\max_{\mH \in \Delta_{\alpha}}^{}  \langle \nabla f(\mX), -\mH \rangle 
& = 
-\min_{\mH \in \Delta_{\alpha}}^{}  \langle \nabla f(\mX), \mH \rangle 
   = - \alpha \min_{}^{} \{0, \lambda_{\min} ( \nabla f(\mX))\} \\
  & = - \alpha \min_{}^{} \{0, \lambda_{\min} ( \mC - \cA^{*}(\vlambda'))\}\nonumber.
\end{align}
Hence we can bound the suboptimality by 
\begin{align}
  \label{eq:21}
  \langle\mC, \mX - \mX^{*}\rangle 
  & \leq\langle\mC, \mX\rangle +  \max_{\mH \in \Delta_{\alpha}} \langle \nabla f(\mX), \mX - \mH \rangle - f(\mX) \\
  &= \langle \mC, \mX \rangle - \vb^{T} \vlambda'+ \frac{\sigma}{2} \|p(\mX)\|^2 
    - \alpha \min\{\lambda_{\min} (\mC - \cA^*(\vlambda')), 0\} \nonumber 
\end{align}
where the first relation is due to~\cref{eq:24}, and in the second relation
we use \cref{eq:31} and \cref{eq:30}.

We can see \cref{eq:21} has a clear relation to \cref{eq:14} as
\(\vlambda'\) is feasible for \cref{eq:12}. Also we notice by
\cref{eq:14}, the term $\sigma \|p(\mX)\|^2 / 2$ is unnecessary and
will result in a worse bound. Interestingly, $\vlambda'$ is exactly
the updated dual in the classical augmented Lagrangian framework.

\section{Comparison against \texttt{USBS}}
\label{sec:extra-solvers}
While we encountered issues when trying to run a full comparison
against \texttt{USBS}, we are able to do a preliminary comparison
against the data reported in their paper. Specifically, we extract
data from their performance figures via the tool
WebPlotDigitizer~\cite{WebPlotDigitizer}. The results are shown in
\cref{tab:usbs-compare}. We were unable to learn if their solver was
run with any multithreading. It was also run with a lower accuracy
than we run $\texttt{SDPLR+}$. Nonetheless, we see that
\texttt{SDPLR+} is faster on most problems.


\begin{table}
\footnotesize
\caption{A comparison against \texttt{USBS}~\cite{angell2024fast}
  using results from their paper. These results indicate that \texttt{SDPLR+} is
  faster on many problems by a considerable margin, albeit these are
  from different systems.  }
    \begin{tabularx}{\linewidth}{@{}lXXXXXXXX@{}}
      \toprule
      Problem/Time (s) & {fe\_sphere} & hi2010 & fe\_body & me2010 \\
      \midrule
      \texttt{USBS/warm} $(\varepsilon = 10^{-1})$ & 31 & 125 & 273 & 417 \\
      \texttt{SDPLR+} $(\varepsilon = 10^{-2})$ & 7 & 160 & 38 & 613  \\
      \midrule
       Problem/Time (s)                & fe\_tooth & 598a & 144 & auto \\
       \midrule 
      \texttt{USBS/warm} $(\varepsilon = 10^{-1})$ &  348 & 636 & 810 & 3444\\
      \texttt{SDPLR+} $(\varepsilon = 10^{-2})$ &  43 & 192 & 170 & 634 \\
      \bottomrule
    \end{tabularx}

    \label{tab:usbs-compare}
\end{table}

\section{$K$-Means Clustering SDP}
\label{sec:kmeans}

In this Section, we illustrate the potential applications of \SDPLRp on $K$-means clustering,
where SDP can play an important building block.

$K$-means clustering partitions $n$ data points
$\vx_1, \ldots, \vx_n \in \R^d$ into $k$ groups to minimize the total
within-cluster sum of squared distances to cluster centroids. It is
one of the most widely used methods in unsupervised learning, but the
underlying combinatorial optimization problem is
$\mathsf{NP}$-hard~\cite{Aloise_2009}. Many methods in practice get
stuck at local optima when starting from poor
initializations. \citet{PengWei2007} proposed a semidefinite
programming relaxation that not only gives a lower bound on
the $k$-means objective, but also provides good initializations
for practical $k$-means methods.

Let $\mW \in \R^{n \times d}$ be the data matrix, standardized so
that each feature has mean~$0$ and standard deviation~$1$.  The $k$-means SDP
relaxation of~\citet{PengWei2007}, dropping the $\mX^2 = \mX$
constraint for scalability, is
\begin{align}
  \label{eq:kmeans-sdp}
  \tag{$k$-means SDP}
  \maximize_{\mX \succeq 0} \quad &  \langle \mW \mW^T, \mX \rangle \\
  \text{subject to} \quad & \mX \ones = \ones \nonumber \\
                          & \Tr(\mX) = k \nonumber \\
                          & \mX_{ij} \geq 0 \quad \forall\, i, j. \nonumber
\end{align}
Here $\mX$ plays the role of a relaxed cluster-membership matrix: in
the integer program, $\mX_{ij} = 1/|C_\ell|$ if $i$ and $j$ belong to the
same cluster $C_\ell$, and $\mX_{ij} = 0$ otherwise.

We test \SDPLRp on two datasets, Iris~\citep{fisher1936iris} and Pen
Digits~\citep{pendigits_ucirepo}. We use the full Iris dataset with 3
classes and 50 data points per class. Pen Digits has 10 classes.
Because \eqref{eq:kmeans-sdp} is densely constrained, we sample 500
data points per class, giving a downsampled dataset of 5000 points. We
set the primal infeasibility tolerance to $10^{-4}$ and the
suboptimality tolerance to $10^{-2}$. The results are summarized in
\Cref{fig:k-means-sdp}. Although
    \eqref{eq:kmeans-sdp} does not directly improve accuracy, it can serve
    as a fundamental building block for more accurate and sophisticated
    $k$-means solvers~\citep{zhuang2022sketch}. We see that our package \SDPLRp can
    efficiently solve this $k$-means clustering SDP.

\begin{figure}[t]
    \captionsetup[subfigure]{justification=centering}
  \centering
  \subfloat[\textsc{Iris (true)}]{\label{fig:iris-true}\includegraphics[width=0.25\linewidth]{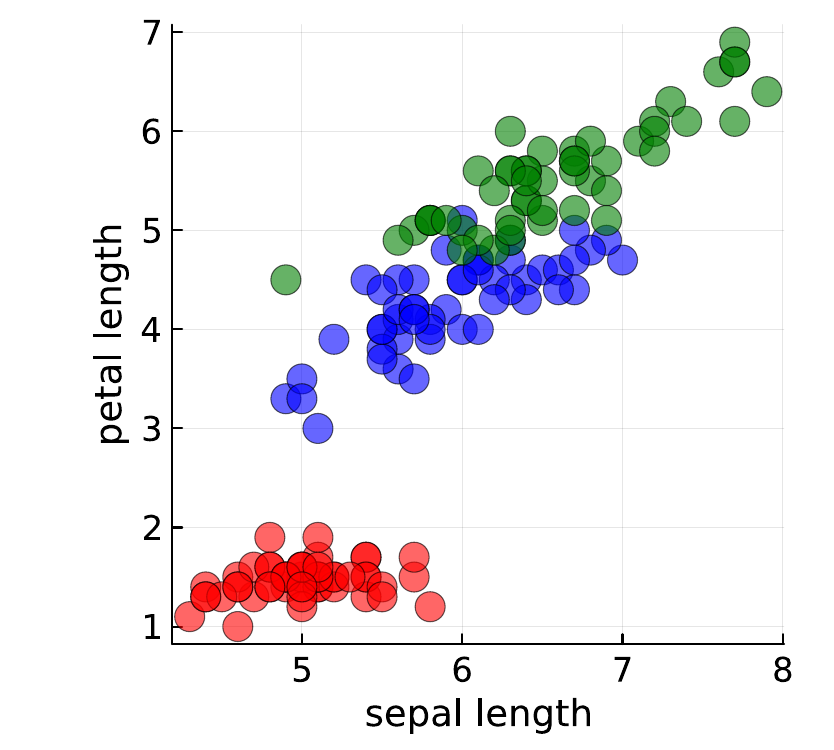}}%
  \hfill 
  \subfloat[\textsc{Iris (pred)}]{\label{fig:iris-pred}\includegraphics[width=0.25\linewidth]{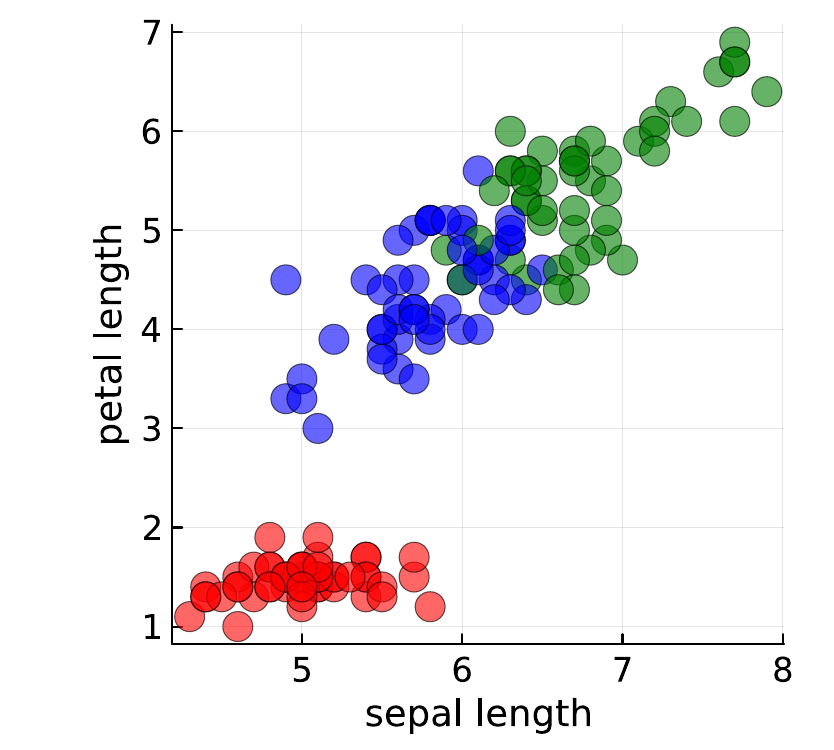}}%
  \hfill
  \subfloat[\textsc{Pen Digits (true)}]{\label{fig:pendigits-true}\includegraphics[width=0.25\linewidth]{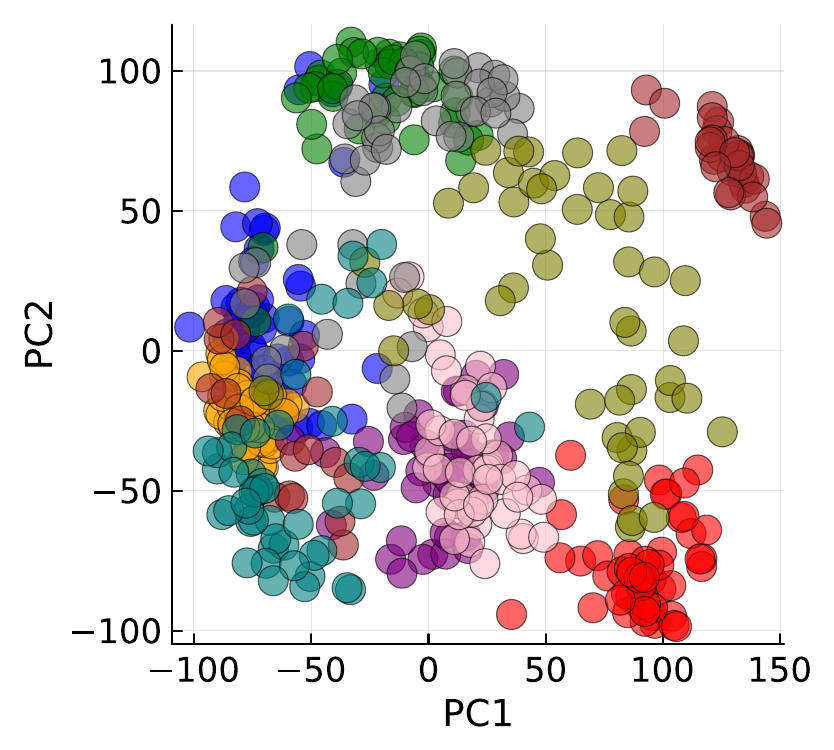}}%
  \hfill
  \subfloat[\textsc{Pen Digits (pred)}]{\label{fig:pendigits-pred}\includegraphics[width=0.25\linewidth]{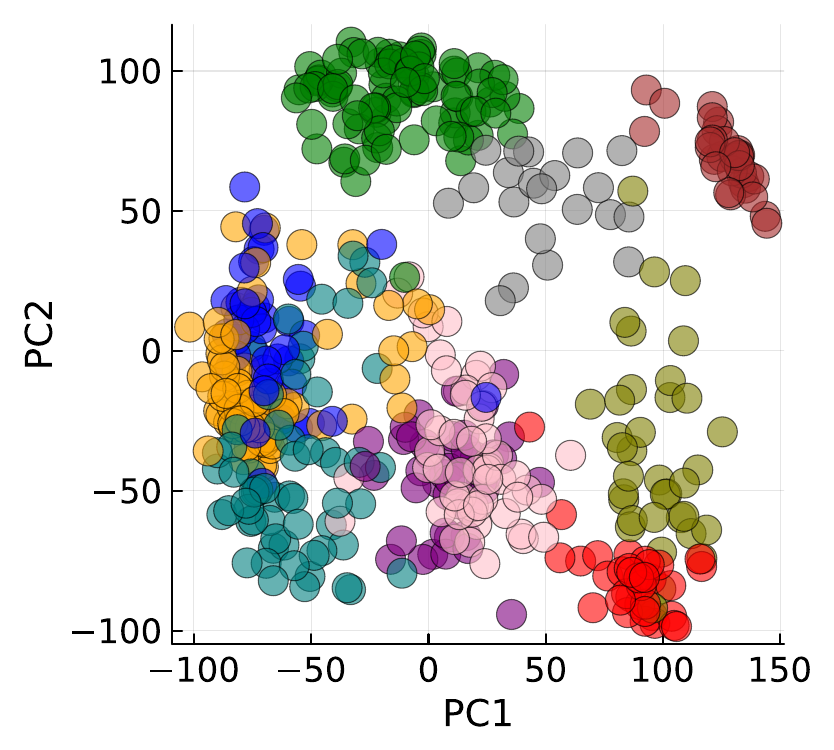}}%
  \caption{\SDPLRp applied to the $k$-means SDP~\eqref{eq:kmeans-sdp}
    on Iris and Pen Digits.  \Cref{fig:iris-true,fig:pendigits-true}
    show the ground-truth labels.
    \Cref{fig:iris-pred,fig:pendigits-pred} show the predicted
    clusters from running \kmeanspp on the spectral embeddings of the
    solution to~\eqref{eq:kmeans-sdp}. For the Iris plots, the axes
    show petal length and sepal length; for the Pen Digits plots, the
    axes show the leading two principal components. Directly running
    \kmeanspp achieves accuracies of 89.3 on Iris and 70.6 on Pen
    Digits; running \kmeanspp on the SDP embeddings gives 82.7 and
    71.0, respectively. The runs take 41 seconds (Iris) and 3540
    seconds (Pen Digits).}
\label{fig:k-means-sdp}
\end{figure}

\bibliographystyle{dgleich-bib3}
\bibliography{refs,yufan_dimacs}

\end{document}